\pgfplotsset{compat=1.15}
\tikzset{
  symbol/.style={
    draw=none,
    every to/.append style={
      edge node={node [sloped, allow upside down, auto=false]{$#1$}}}
  }
}
\crefname{defn}{definition}{definitions}
\Crefname{defn}{Definition}{Definitions}
\def\makeCal#1{%
\expandafter\newcommand\csname c#1\endcsname{\mathcal{#1}}}
\def\makeBB#1{%
\expandafter\newcommand\csname b#1\endcsname{\mathbb{#1}}}
\def\makeFrak#1{%
\expandafter\newcommand\csname f#1\endcsname{\mathfrak{#1}}}
\edef\y{\@Alph\count@}%
\theoremstyle{plain}
\newtheorem{thm}{Theorem}[section]
\newtheorem{cor}[thm]{Corollary}
\newtheorem{lem}[thm]{Lemma}
\newtheorem{prop}[thm]{Proposition}
\theoremstyle{definition}
\newtheorem{rem}[thm]{Remark}
\newtheorem{ex}[thm]{Example}
\def\rm{\mathrm}
\DeclareMathOperator{\Bl}{Bl}
\newcommand{\dual}{\vee}
\DeclareMathOperator{\Chow}{Chow}
\DeclareMathOperator{\Spec}{Spec}
\newcommand{\sh}{\mathcal}
\newcommand{\spec}{\mathrm{Spec}}
\newcommand{\arr}{\mathcal}
\def\bb{\mathbb}
\newcommand\blfootnote[1]{%
  \begingroup
  \renewcommand\thefootnote{}\footnote{#1}%
  \addtocounter{footnote}{-1}%
  \endgroup
}
\newcommand{\bcg}{{\mathcal B}}
\newcommand{\poly}{{\mathcal{Poly}}}
\newcommand{\polyh}{\mathcal{Poly}_\infty}
\newcommand{\diagh}{\mathcal{Diag}_\infty}
\begin{document}

\address{Prabhat Devkota \newline
\indent Stony Brook University, Department of Mathematics\newline
\indent 100 Nicolls Road, Stony Brook, NY, 11794}
\email{prabhat.devkota@stonybrook.edu}

\address{Antonios-Alexandros Robotis \newline
\indent Cornell University, Department of Mathematics\newline
\indent 212 Garden Avenue, Ithaca, NY, 14853}
\email{ar2377@cornell.edu}

\address{Adrian Zahariuc \newline
\indent University of Windsor, Department of Mathematics and Statistics\newline
\indent 401 Sunset Avenue, Windsor, ON, N9B 3P4, Canada}
\email{adrian.zahariuc@uwindsor.ca}

\title{Multiscale differentials and wonderful models}

\author[P. Devkota]{Prabhat Devkota}
\author[A. Robotis]{Antonios-Alexandros Robotis}
\author[A. Zahariuc]{Adrian Zahariuc}

\begin{abstract}
    We study the relationships between several varieties parametrizing marked curves with differentials in the literature. More precisely, we prove that the space $\cB_n$ of multiscale differentials of genus $0$ with $n+1$ marked points of orders $(0,\ldots, 0, -2)$, as introduced in \cite{bcggm}, is a wonderful variety in the sense of \cite{bhmpw2022semismalldecompositionofchowringofmatroid}. This shows that the Chow ring of $\cB_n$ is generated by the classes of a collection of smooth boundary divisors with normal crossings subject to simple and explicit linear and quadratic relations. Furthermore, we realize $\cB_n$ as a subvariety of the space $\cA_n$ of multiscale lines defined in \cite{augmentedstability} and prove that $\cB_n$ can be realized as the normalized Chow quotient of $\cA_n$ by a natural $\bC^*$-action.
\end{abstract}

\maketitle

\tableofcontents

\section{Introduction}
\label{S:introduction}

Configuration spaces of points and their compactifications are classical objects of study, with a myriad of applications in both mathematics and physics. A classical example of this type is the space of configurations of $n$ distinct points in $\bC$ considered up to affine linear automorphisms of $\bC$. This admits a rather remarkable compactification known as the Grothendieck-Knudsen space of stable $n$-pointed genus $0$ curves $\overline{M}_{0,n+1}$, which is a smooth $(n-2)$-dimensional complex projective manifold. These spaces and their higher genus versions, the Deligne-Mumford stacks $\overline{\cM}_{g,n}$\footnote{We employ the convention of denoting moduli stack of curves by calligraphic $\overline{\cM}_{g,n}$; in genus 0, we stick to roman $\overline{M}_{0,n}$ since it is a smooth scheme.} parametrizing stable genus $g$ curves with $n$ marked points, have been the subject of intense study -- cf. \cites{DM69,Knudsen1983}.

To better understand a geometric space one often studies the various bundles defined over it. The moduli stack $\cM_{g,n}$ of nonsingular genus $g$ curves with $n$ marked points is no different. Of particular interest are the twisted Hodge bundles. Given an $n$-tuple of integers $\mu=(m_1,\ldots,m_n)$ with $m_1\geq\cdots\geq m_k\geq 0 >m_{k+1}\geq\cdots \geq m_n$ satisfying $\sum_im_i=2g-2$, denote by $\tilde{\mu}=(m_{k+1},\ldots,m_n)$ the negative part of $\mu$. Then the associated twisted Hodge bundle $\bE_{g,n}(\tilde{\mu})$ is a vector bundle over $\cM_{g,n}$ whose fiber over a closed point $(C;x_1,\ldots,x_n)$ is the vector space $H^0(C,\omega_C(-\sum_{i\geq k+1}m_ix_i))$ of meromorphic 1-forms with at worst prescribed poles at $x_i$.

Inside the twisted Hodge bundle are certain special subspaces, called the moduli spaces of \emph{meromorphic differentials} $\Omega\mathcal{M}_{g,n}(\mu)$. These are subspaces whose fiber over $(C;x_1,\ldots,x_n)$ is the collection of meromorphic differentials $\omega$ on $C$ having zero (or pole) of order $m_i$ at each $x_i$. Because of their interplay with $\cM_{g,n}$, these spaces have garnered considerable interest in algebraic geometry and other areas. For instance, because a meromorphic differential endows the underlying Riemann surface with the structure of a (singular) flat metric, these moduli spaces of abelian differentials have been objects of particular interest in Teichm\"uller dynamics -- see \cite{chen2016teichmullerdynamicseyesalgebraic} for the discussion of Teichm\"uller dynamics from an algebro-geometric perspective.

The moduli space of abelian differentials $\Omega\mathcal{M}_{g,n}(\mu)$ admits a $\bC^*$-action by rescaling the differentials; the resulting quotient, denoted $\bP\Omega\mathcal{M}_{g,n}(\mu)$ and also usually called the moduli space of abelian differentials, admits a modular compactification $\bP\Xi\overline{\mathcal{M}}_{g,n}(\mu)$ parametrizing \emph{multiscale differentials} on a stable $n$-pointed curve of genus $g$ \cite{bcggm}. A multiscale differential $(C;x_1,\ldots,x_n;\omega;\sigma;\Gamma)$ consists of the following data:
\begin{enumerate}
    \item a stable $n$-pointed genus $g$ curve $(C;x_1,\ldots,x_n)\in\overline{\mathcal{M}}_{g,n}$;\vspace{2mm}
    \item a level graph $\Gamma$ with $L$ levels compatible with the dual graph of $(C;x_1,\ldots,x_n)$;\vspace{2mm}
    \item an assignment of a non-negative integer on each edge of $\Gamma$, called enhancement, that specifies the order of zeros/poles on the nodes;\vspace{2mm}
    \item a collection $\omega=\{\omega_v\}_{v\in V(\Gamma)}$ of meromorphic differentials, one for each vertex on $\Gamma$, with prescribed zeros and poles at the marked points and the nodes, satisfying a \emph{Global Residue Condition}; and 
    \vspace{2mm}
    \item a collection $\sigma$ of \emph{prong-matchings} on the edges of $\Gamma$.
\end{enumerate}

Neither the global residue condition nor prong matching will play any role in our discussion, so we refer to \cite{bcggm} for more details. Similarly, the order of zero/pole on every node is determined by the underlying dual graph if the dual graph is a tree \cite{Devkotacohomology}*{Rem. 1}, so we will also refer to \cite{bcggm} for the discussion on enhancements. The moduli space $\bP\Xi\overline{\mathcal{M}}_{g,n}(\mu)$ parametrizes equivalence classes of multiscale differentials, where equivalent multiscale differentials differ by the action of a torus $(\bC^*)^L$ that acts by rescaling the differentials level-by-level. This moduli space of multiscale differentials is a smooth and proper Deligne--Mumford stack with a normal crossing boundary divisor \cite{bcggm}, whose coarse moduli space is a projective variety \cite{chen2022kodairadimensionmodulispaces}*{\S3}. Forgetting the differentials, we obtain a morphism 
    \[ \tau:\bP\Xi\overline{\mathcal{M}}_{g,n}(\mu)\rightarrow \overline{\mathcal{M}}_{g,n}, \] which is generically one-to-one, but not surjective. However, when $g=0$, $\tau$ is surjective and thus birational at the level of coarse moduli spaces. Throughout this paper, unless otherwise stated, we consider genus $0$ curves with $n+1$ marked points of orders $\mu=(0,\ldots,0,-2) $, so let us fix the notation 
    \[ \bcg_n\coloneqq\bP\Xi\overline{M}_{0,n+1}(0,\ldots,0,-2). \] 
    As noted in \cite{Devkotacohomology}*{\S4}, $\bcg_n$ is actually a smooth {\em projective variety}.

    \subsection{The space \texorpdfstring{$\bcg_n$}{Bn} as a wonderful variety} Our first result is that $\bcg_n$ coincides with a long-known space which is well-studied both combinatorially and geometrically. We will denote this space by $Y_n$. It can be described equivalently as:
	\begin{enumerate}
		\item the maximal wonderful model \cites{deconciniprocesi1995wonderfulmodels, li2009wonderfulcompactificationarrangementsubvarieties} of the projectivization of the $A_{n-1}$ Coxeter arrangement; \vspace{2mm}
		\item the wonderful variety \cite{bhmpw2022semismalldecompositionofchowringofmatroid}*{Rem. 2.13} associated to the matroid $M(K_n)$ of the complete graph $K_n$ and its representation by a positive root system of $\mathfrak{sl}_n$;  \vspace{2mm}
		\item the fiber of Ulyanov's polydiagonal compactification $C\langle n \rangle$ \cite{ulyanov2002polydiagonalcompactification} over a point on the small diagonal of $C^n$, where $C$ is a smooth curve.
	\end{enumerate}
    
	More explicitly, consider $n$ points $p_1,\ldots,p_n \in \bP^{n-2}$ in general linear position, take all proper linear subspaces of $\bP^{n-2}$ spanned by a subset of $\{ p_1,\ldots,p_n \}$ and all possible intersections of all these spans, and blow up all these linear loci in increasing order of dimension. The equivalence of the first two definitions is straightforward after unwinding them -- see \Cref{lem: Y is wonderful variety}. We will not use the equivalence with the third definition, though it can be proved with some work using Ulyanov's construction -- see \cite{li2009wonderfulcompactificationarrangementsubvarieties}*{Prop. 2.8} and \cite{zahariuc2024smallresolutionsmodulispaces}*{Prop. B.1}.

    \begin{thm}[\Cref{multiscale_wonderful}]\label{intro_multiscale_wonderful}
        There is an isomorphism $\bcg_n \cong Y_n$.
    \end{thm}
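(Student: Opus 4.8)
The plan is to realize both $\bcg_n$ and $Y_n$ as modifications of the same dense open configuration space and then to compare them over $\overline{M}_{0,n+1}$. First I would identify the interiors. A non-degenerate genus $0$ multiscale differential is a smooth $\bP^1$ with marked points $x_1,\dots,x_n,x_{n+1}$ together with a meromorphic differential having a double pole at $x_{n+1}$ and no other zeros or poles; in the affine coordinate $z$ with pole at $x_{n+1}$ such a differential is $dz$ up to the rescaling action, so the interior of $\bcg_n$ is the space of $n$ distinct points $z_1,\dots,z_n\in\bC$ modulo the affine group $z\mapsto az+b$. This is precisely $M_{0,n+1}$, and under the centered projectivization $(z_1,\dots,z_n)\mapsto[z_1-\bar z:\cdots:z_n-\bar z]$ it is the complement of the braid arrangement $A_{n-1}$ in $\bP^{n-2}$, i.e.\ the dense open of $Y_n$. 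This fixes an isomorphism $j$ of interiors, and the theorem reduces to showing that $j$ extends to an isomorphism of the two compactifications.

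Next I would set up the comparison over $\overline{M}_{0,n+1}$. The forgetful morphism $\tau\colon\bcg_n\to\overline{M}_{0,n+1}$ is birational, and by Kapranov's construction together with \Cref{lem: Y is wonderful variety}, the blow-down $\beta\colon Y_n\to\overline{M}_{0,n+1}$ exhibits $Y_n$ as the iterated blow-up of $\overline{M}_{0,n+1}$ along (the proper transforms of) the strata indexed by the \emph{reducible} flats of $M(K_n)$, that is, partitions $\{S_1,\dots,S_k\}$ of a subset of $\{1,\dots,n\}$ with $k\ge 2$ nontrivial blocks, blown up in increasing dimension. I would then factor $\tau$ through $Y_n$: by the universal property of blowing up it suffices to check that the pullback to $\bcg_n$ of the ideal sheaf of each reducible-flat center becomes invertible. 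This is where the multiscale structure does the work; the level parameters of the $(\bC^*)^{L}$-rescaling supply local equations for the boundary that cut out precisely these centers as Cartier divisors, so $\tau$ should lift to a morphism $\tilde\tau\colon\bcg_n\to Y_n$ extending $j$.

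To match the two boundaries I would establish the dictionary between flats and two-level graphs. A codimension-one boundary stratum of $\bcg_n$ is a two-level graph whose top level carries $x_{n+1}$ and some marked points, while the bottom level consists of components carrying disjoint blocks $S_1,\dots,S_k$ of the remaining markings that collide at a common finer scale; such a divisor corresponds exactly to the flat with nontrivial blocks $S_1,\dots,S_k$, and deeper strata with more levels correspond to chains of flats, i.e.\ to nested sets. This is meant to match the boundary divisors of $Y_n$, which are indexed by the proper flats of $M(K_n)$, with normal crossings governed by nested sets and stratum codimension equal to the number of levels below the top. Granting this, $\tilde\tau$ is a bijection on closed points compatible with the stratifications, and since it is a proper birational morphism of smooth projective varieties that is bijective onto the normal variety $Y_n$, it is an isomorphism by Zariski's main theorem.

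The hard part will be the content underlying the second and third steps: one must verify that the multiscale compactification performs \emph{exactly} the blow-ups of the maximal building set---neither fewer, so that $\tilde\tau$ is an isomorphism and not merely a birational contraction, nor more, so that $\bcg_n$ is not a further modification of $Y_n$. Concretely, I expect this to require a local comparison of the plumbing and level coordinates on $\bcg_n$ near a stratum of a given level graph with the exceptional coordinates of the iterated blow-up near the corresponding nested set, confirming that the relative scales recorded by the levels are precisely the normal directions introduced by successively blowing up the reducible flats; equivalently, it is the statement that level graphs are in codimension- and incidence-preserving bijection with the nested sets of the braid arrangement. Establishing this local dictionary, and keeping careful track of the $(\bC^*)^{L}$-quotient, is where essentially all of the work lies.
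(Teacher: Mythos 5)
Your skeleton is sound, and it is in fact the same skeleton the paper uses: both arguments compare $\bcg_n$ and $Y_n$ through $\overline{M}_{0,n+1}$, and your combinatorial dictionary (two-level trees $\leftrightarrow$ partitions of $[n]$ with at least two non-singleton blocks, deeper strata $\leftrightarrow$ chains of such partitions) is exactly the one appearing in the paper's proof. The genuine gap is in your second step: lifting $\tau$ through the iterated blowup by the universal property requires showing, stage by stage through the tower, that the inverse image ideal sheaf of each successive center is invertible on $\bcg_n$. You assert this ("the level parameters supply local equations for the boundary that cut out precisely these centers as Cartier divisors") but do not prove it, and you concede that this "is where essentially all of the work lies." This is not a routine check: the set-theoretic preimage $\tau^{-1}(\overline{M}_\sigma)$ contains many strata besides the divisor $D_\Gamma$ with $\sigma(\Gamma)=\sigma$, and invertibility of the pulled-back ideal must be verified at every point, for every center, after every previous blowup. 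A secondary soft spot: even granting the dictionary of strata, a bijection of stratification posets does not formally yield bijectivity of $\tilde\tau$ on closed points; you would need stratum-wise bijectivity, or better, the statement that $\tilde\tau$ contracts no divisor, combined with smoothness (hence $\bQ$-factoriality) of $Y_n$, before Zariski's main theorem applies.

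The paper closes precisely this gap without any local plumbing-coordinate analysis, using two inputs absent from your proposal. On the multiscale side, it invokes the result of \cite{Devkotacohomology}*{\S 5} that $\tau:\bcg_n\to\overline{M}_{0,n+1}$ \emph{already} factors as a composition of blowups along smooth centers, with an explicit identification of the centers as the loci $\overline{M}_{\sigma}$ and of the order as an increasing enumeration of $\mathrm{Ex}_n$ (\Cref{proposition: Prabhat's construction2}). On the wonderful side, it proves \Cref{lem: G-factors commute with blowup}, which identifies the dominant transform on $\overline{M}_{0,n+1}$ of each polydiagonal $\Delta^h_\sigma$ with the stratum $\overline{M}_\sigma$, so that \cite{li2009wonderfulcompactificationarrangementsubvarieties}*{Thm. 1.3} exhibits $Y_n$ as the \emph{identical} tower of blowups of $\overline{M}_{0,n+1}$ (\Cref{proposition: wonderful variety from Mbar}). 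Once both spaces are realized as the same sequence of blowups, the isomorphism is immediate: no comparison morphism, no universal-property lifting, and no appeal to Zariski's main theorem is needed. In short, your plan correctly identifies what must be true, but the two lemmas that make it true—Devkota's factorization and the compatibility of ${\arr G}$-factors with blowups—are exactly the content your proposal defers.
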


    It follows that $\mathrm{CH}^*(\bcg_n)$ is isomorphic to the Chow ring of the matroid $M(K_n)$ -- see \cite{feichtneryuzvinskychowring}*{Cor. 2}. In particular, it is generated by the classes of combinatorially described smooth boundary divisors subject to linear and quadratic relations -- see \Cref{C:chowring2}. We were informed by Johannes Schmitt that this description of the Chow ring also follows from \cite{pandharipande2025logarithmictautologicalringsmoduli}*{Thm. 11}.

    \subsection{Relation to the space of multiscale lines with collisions} A feature of the spaces introduced in \cite{bcggm}, including $\bcg_n$, is that bubbling occurs when one takes the limit of families of curves where marked points become arbitrarily close. This resembles the behavior of $\overline{M}_{0,n+1}$ where colliding marked points produce new irreducible components. On the other hand, in certain contexts it is necessary to find a modular compactification of the space of $n$ not necessarily distinct points in $\bb{C}$ up to translation; i.e. $\bC^n/\bC$, where $\bC$ is embedded in $\bC^n$ as the small diagonal. This is one of the objectives of \cite{augmentedstability}, where $\bC^n/\bC$ appears as the local model of the quotient of the manifold of Bridgeland stability conditions \cite{Br07} by $\bC$. The problem of compactifying the stability manifold is thereby related to the problem of producing a suitable compactification of $\bC^n/\bC$.

    An element of $\bC^n/\bC$ can be equivalently regarded as an isomorphism class of data $(\bP^1,\infty,\omega,p_1,\ldots, p_n)$, where $\omega$ is a meromorphic differential on $\bP^1$ with a unique order $2$ pole at $\infty$, and $p_1,\ldots, p_n$ are points in $\bC=\bP^1\setminus \{\infty\}$. That is, the marked points $p_1,\ldots, p_n$ are allowed to collide with each other but not with $\infty$.\footnote{This is reminiscent of the situation of \cite{Hassett} but not a special case. Indeed, the numerical constraints in the genus $0$ case in \emph{loc. cit.} do not allow for ($n+1$)-marked genus $0$ curves where $n$ points can collide with each other but not with the remaining marked point.} This perspective leads to the construction of the \emph{moduli space of multiscale lines with collision} $\cA_n$, as in \cite{augmentedstability}. An $n$\emph{-marked multiscale line} consists of data $(\Sigma,\preceq,\omega_\bullet,p_\infty,p_1,\ldots, p_n)$, where 
    \begin{enumerate}
        \item $\Sigma$ is a nodal genus $0$ curve equipped with marked points $p_\infty,p_1,\ldots, p_n$ such that $p_i\ne p_\infty$ for all $i=1,\ldots,n$; \vspace{2mm}
        \item $\preceq$ is a level structure on the set of irreducible components of $\Sigma$; i.e. a relation which determines when two vertices of the dual tree are at the same ``height'';\footnote{In particular, the dual tree $\Gamma(\Sigma)$ of $\Sigma$ is a level tree as in \cite{bcggm}.} and \vspace{2mm}
        \item $\omega_\bullet$ is a collection of meromorphic differentials, one per irreducible component of $\Sigma$, with a unique pole of order $2$ on their respective components at the node closest to $p_\infty$ (resp. $p_\infty$ in the root case).
    \end{enumerate}
    The level tree $(\Gamma(\Sigma),\preceq)$ has a unique top component, which is the one containing $p_\infty$. Furthermore, the marked points $p_1,\ldots, p_n$ are required to lie on the bottom components of $\Sigma$, all of which are at the same level. The level structure endows the dual tree $\Gamma(\Sigma)$ of $\Sigma$ with the structure of a level graph as in the case of \cite{bcggm} mentioned above. The space $\cA_n$ constructed in \cite{augmentedstability} parametrizes $n$-marked multiscale lines up to \emph{complex projective equivalence}, a variant of the equivalence considered in \cite{bcggm} except that bottom level differentials are not rescaled. The space $\cA_n$ is a proper algebraic variety by \cite{augmentedstability}*{Thm. 4.12} which is projective by \cite{robotis2024spaces}*{Thm. 4.7}, where it is furthermore shown that it coincides with an augmented wonderful variety as studied in \cite{bhmpw2022semismalldecompositionofchowringofmatroid}. 
    
    The locus of irreducible curves $\cA_n^\circ\subset \cA_n$ consists of points $(\bP^1,\omega,p_\infty,p_1,\ldots, p_n)$ as above and is thus identified with $\bC^n/\bC$. Since $\omega \in \Omega_{\bP^1}(2p_\infty) \cong \cO_{\bP^1}$, it is determined uniquely up to a scalar multiple. By contrast, the locus of irreducible curves in $\bcg_n$ consists of the same data, except that $\omega$ is only defined up to $\bC^*$-scaling. Consequently, there is a map from the open subset $\cA_n^{\circ\circ} \subset \cA_n$ consisting of irreducible curves with non-colliding marked points to $\cB_n$, with $\bC^*$-fiber corresponding to the forgotten differential $\omega$, and we obtain a rational map
    \[ 
        \phi_n:\cA_n \dashrightarrow \bcg_n. 
    \]
    
    These observations, as well as many geometric similarities between the spaces $\cA_n$ and $\cB_n$, motivate the present investigation of their precise relationship. We show that there is a closed immersion $\bcg_n \hookrightarrow \cA_n$ realizing $\bcg_n$ as component of the boundary divisor of $\cA_n$ -- see \Cref{C: special divisor2}. This can be regarded as a special case of the fact that a wonderful variety is a divisor on the respective augmented wonderful variety \cite{bhmpw2022semismalldecompositionofchowringofmatroid}*{Rem. 2.13}, in light of \Cref{intro_multiscale_wonderful} and \cite{robotis2024spaces}*{Thm. 4.7}. First, we construct a natural resolution of $\phi_n$.

    \begin{prop}\label{prop: resolution of A to B map}
        The rational map $\phi_n$ is regular in a Zariski neighborhood of $\bcg_n$, regarded as a divisor on $\cA_n$. Moreover, $\phi_n$ admits a resolution $\cR \to \bcg_n$ which factors through the $\bP^1$-bundle $P\to \bcg_n$, whose fiber over any point in $\bcg_n$ is the compactified (co)tangent line $\bP(\bC \oplus T^\vee_{x_{n+1}}C)$ at the point of order $-2$. More precisely, there is a commutative diagram
        \begin{center}
            \begin{tikzpicture}[scale = 1.8]
            \node (sw) at (0,0) {$\cA_n$};
            \node (nw) at (0,1) {$\cR$};
            \node (ne) at (1,1) {$P$};
            \node (se) at (1,0) {$\bcg_n$};
            \draw [->] (nw) to (sw); \draw [->] (nw) to (ne); \draw [->] (ne) to (se); \draw [->] (nw) to (se);
            \draw [->, dashed] (sw) to node [anchor=south] {$\phi_n$} (se);
            \end{tikzpicture}
        \end{center}
        such that $\cR \to \cA_n$ is a composition of blowups with smooth centers disjoint from $\bcg_n \hookrightarrow \cA_n$.
    \end{prop}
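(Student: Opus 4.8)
The plan is to analyze the rational map $\phi_n : \cA_n \dashrightarrow \bcg_n$ carefully on the open locus $\cA_n^{\circ\circ}$ of irreducible curves with non-colliding marked points, where it is given by forgetting the differential $\omega$ and remembering it only up to $\bC^*$-scaling. On this locus $\phi_n$ is a $\bC^*$-fibration, so the obstruction to extending $\phi_n$ regularly is precisely the failure of this $\bC^*$-bundle to extend over the boundary of $\cA_n$. The geometric content of the statement is that the $\bP^1$-bundle $P \to \bcg_n$ — obtained by compactifying the tangent/cotangent line $\bP(\bC \oplus T^\vee_{x_{n+1}}C)$ at the order $-2$ point — provides exactly the right space to interpolate the missing differential. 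My first step would be to make the identification of $P$ explicit: over a multiscale differential $(C; x_1,\ldots,x_{n+1}; \omega;\ldots) \in \bcg_n$, the differential $\omega$ is only defined up to scale, and a choice of actual differential (with the prescribed order $-2$ pole at $x_{n+1}$) is the same as a choice of nonzero element in the relevant one-dimensional residue/coefficient space, which is naturally identified with $T^\vee_{x_{n+1}}C$ (or its dual). The compactification $P = \bP(\bC \oplus T^\vee_{x_{n+1}}C)$ then parametrizes a choice of differential together with the possibility of it degenerating (the point at infinity), which is how one crosses into the boundary of $\cA_n$.

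Second, I would construct the resolution $\cR$ by a sequence of blowups of $\cA_n$ along smooth centers lying in the boundary but disjoint from the divisor $\bcg_n \hookrightarrow \cA_n$. The idea is to resolve the indeterminacy of $\phi_n$ by repeatedly blowing up the loci where $\omega$ (as a section of the appropriate line bundle pulled back from the Hodge-type bundle on $\cA_n$) vanishes or acquires poles along the boundary strata. Using \Cref{intro_multiscale_wonderful}, both $\cA_n$ and $\bcg_n$ are wonderful/augmented wonderful varieties, so their boundary strata are indexed by the combinatorics of the matroid $M(K_n)$ (flags of flats), and the blowup centers can be described combinatorially. The required map $\cR \to P$ should then be defined by sending a point of $\cR$ to the point of $\bcg_n$ underlying its image in $\cA_n$, together with the limiting line in $\bP(\bC \oplus T^\vee_{x_{n+1}}C)$ recording the asymptotic behavior of the rescaled differential; the blowups are arranged precisely so that this limiting line is well-defined and varies regularly.

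Third, I would verify the two regularity claims. To see that $\phi_n$ is already regular in a Zariski neighborhood of $\bcg_n \subset \cA_n$, I would check that the indeterminacy locus of $\phi_n$ is contained in the union of the boundary strata that are blown up, and that all of these are disjoint from $\bcg_n$ itself; this is where the matroid-combinatorial bookkeeping of the boundary of the augmented wonderful variety is essential. The heart of the argument is to show that the blowup centers are smooth and disjoint from $\bcg_n$ — this guarantees that $\cR \to \cA_n$ restricts to an isomorphism over a neighborhood of $\bcg_n$, so the factorization through $P$ descends to give a regular map near $\bcg_n$ on the nose. Finally, the commutativity of the diagram and the factorization $\cR \to P \to \bcg_n$ follow by comparing the two recipes for the image differential on the dense open locus $\cA_n^{\circ\circ}$ and extending by continuity (density).

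The main obstacle I anticipate is the explicit identification and control of the blowup centers: one must show they are simultaneously smooth, have the correct normal crossings behavior, genuinely resolve the indeterminacy of $\phi_n$, and — most delicately — are disjoint from the divisor $\bcg_n \hookrightarrow \cA_n$. The disjointness is what upgrades the statement from ``$\phi_n$ admits a resolution'' to the sharper claim that $\phi_n$ is \emph{already regular near $\bcg_n$}, and it will require a precise understanding of which boundary strata of $\cA_n$ meet $\bcg_n$ versus which contain the indeterminacy of $\phi_n$. I expect this to reduce to a combinatorial statement about the lattice of flats of $M(K_n)$ and the position of the distinguished marked point $x_{n+1}$ of order $-2$, which governs both the embedding $\bcg_n \hookrightarrow \cA_n$ and the degeneration locus of the forgotten differential.
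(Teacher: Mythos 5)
Your overall architecture --- resolve $\phi_n$ by blowups whose centers avoid $\bcg_n$, factor the resolution through the compactified (co)tangent $\bP^1$-bundle $P$, and deduce regularity near $\bcg_n$ from the disjointness of the centers --- is the same skeleton as the paper's, and your reliance on \Cref{multiscale_wonderful} is legitimate. But there is a genuine gap at the one place where the proof has real content: you misidentify both the indeterminacy locus of $\phi_n$ and the blowup centers. You propose centers ``lying in the boundary'' of $\cA_n$, obtained by ``blowing up the loci where $\omega$ vanishes or acquires poles along the boundary strata.'' In fact the indeterminacy locus of $\phi_n$ is the union of the proper transforms on $\cA_n$ of the collision polydiagonals $\Delta_\sigma \subset \bP^{n-1}$, $\sigma \in L_n\setminus\{\top\}$, i.e.\ the loci where marked points collide (\Cref{rem: regular locus}), and these meet the \emph{interior} of $\cA_n$: already for $n=3$, $\phi_3$ is projection of $\bP^2$ from the interior point $q=[1:1:1]$, corresponding to all three points colliding on an irreducible curve, and the resolution is $\cR=\Bl_q\cA_3$ (\Cref{Ex:n=3}). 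So a procedure that only blows up boundary strata, or loci where the differential degenerates, can never resolve $\phi_n$; the correct centers are collision loci, which do turn out to be disjoint from $\bcg_n$ but are not contained in the boundary.

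Second, the steps you defer to ``matroid-combinatorial bookkeeping'' --- that the limiting-line map $\cR\to P$ is algebraic, that the centers are smooth and transverse, and that they avoid $\bcg_n$ --- are exactly where the paper's choice of model does all the work. The paper never argues intrinsically on moduli: $\cA_n$, $P$ and $\cR$ are the wonderful models of the explicit linear arrangements $\polyh$, $\poly$ and $\polyh\cup\poly$ in $\bP^{n-1}$, and $\phi_n$ is literally linear projection from $[1:\cdots:1]$. All maps in the diagram then come from \cite{li2009wonderfulcompactificationarrangementsubvarieties}*{Thm.~1.3} applied to different orderings of the same building set; the disjointness of the exceptional locus of $\cR\to\cA_n$ from $\delta(Y_n)$ is an application of \cite{li2009wonderfulcompactificationarrangementsubvarieties}*{Thm.~1.1} (\Cref{lem: no blowups near B}); and the identification of $P$ with $\bP(\cO_{Y_n}\oplus y_{n+1}^*\omega_{\cC/Y_n})$ is proved via the $\psi$-class identity $\kappa^*c_1(\cO_H(1))=\psi_{n+1}$ (\Cref{lem: description of P^1-bundle}), not by a pointwise residue/coefficient argument. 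Your intrinsic approach could in principle be completed along these lines, but as written the construction of $\cR$ is both unexecuted and aimed at the wrong locus, so the proposal does not yet constitute a proof.
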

    \noindent
    The variety $\cR$ is constructed explicitly as a wonderful model in \Cref{Section 2}.
    
    Regarded as a subvariety of $\cA_n$, $\cB_n$ is contained in the fixed locus of the $\bC^*$-action which scales the differentials on the bottom level of a multiscale line. Consequently, the map $\cA_n^{\circ\circ} \to \cB_n$ above can be realized explicitly as a $\bC^*$-retraction of $\cA_n^{\circ\circ}$ onto an open subset of $\cB_n$. In \Cref{Section 4}, we strengthen these observations by realizing $\cB_n$ as both the normalized Chow quotient of $\cA_n$ under the $\bC^*$-action and the geometric invariant theory quotient for a suitable choice of linearization.
     
    \begin{thm}[\Cref{thm: normalized Chow quotient}]
        There is an isomorphism $\bcg_n \cong \cC(\cA_n)$, where $\cC(\cA_n)$ is the normalized Chow quotient of $\cA_n$ by $\bC^*$.
    \end{thm}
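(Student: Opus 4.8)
The plan is to realize $\bcg_n$ as the parameter space for the family of generic $\bC^*$-orbit closures in $\cA_n$, and then to identify this with Kapranov's normalized Chow quotient. Recall that over the open locus $\cA_n^{\circ\circ}\subset\cA_n$ of irreducible lines with non-colliding marked points the $\bC^*$-action rescaling the bottom differential is free, with quotient a dense open $U\subseteq\bcg_n$. For $x\in\cA_n^{\circ\circ}$ the orbit closure $\overline{\bC^*\cdot x}\subset\cA_n$ is a rational curve whose class in the Chow variety is independent of $x$; sending $x$ to this cycle descends to a locally closed immersion $U\hookrightarrow\Chow(\cA_n)$, and by definition $\cC(\cA_n)$ is the normalization of the closure of its image.

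First I would promote the resolution of \Cref{prop: resolution of A to B map} into a global family of orbit closures over all of $\bcg_n$. The composite $\cR\to P\to\bcg_n$ is proper and equidimensional with one-dimensional fibers, namely the fibers of the $\bP^1$-bundle $P$ with smooth blown-up centers, so by miracle flatness $\cR\to\bcg_n$ is flat. Taking the proper pushforward of the relative fundamental cycle along the combined map $\cR\to\bcg_n\times\cA_n$ yields a family of $1$-cycles on $\cA_n$, of constant degree since the source is flat over the connected base $\bcg_n$; over $U$ its members are exactly the generic orbit closures, because the fiber of $P$ over a point of $U$ is the compactified cotangent line $\bP(\bC\oplus T^\vee)$, which maps isomorphically onto $\overline{\bC^*\cdot x}$. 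By Kollár's theory of families of algebraic cycles (the base $\bcg_n$ being smooth, hence seminormal, in characteristic $0$), this family is classified by a morphism
\[ \psi\colon \bcg_n\longrightarrow \Chow(\cA_n). \]
Since $\psi$ restricts to the orbit-closure map on the dense open $U$ and $\bcg_n$ is irreducible, its image lands in $\overline{\psi(U)}=\cA_n/\!\!/_{\mathrm{Ch}}\bC^*$; as $\bcg_n$ is normal, $\psi$ factors through the normalization, giving $\tilde\psi\colon\bcg_n\to\cC(\cA_n)$. As $\bcg_n$ is projective, $\tilde\psi$ is proper, and it is birational because $\psi|_U$ is injective: a generic orbit closure recovers the underlying line with its $n$ marked points and the differential up to scale, i.e. the point of $U$.

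It then remains to upgrade $\tilde\psi$ to an isomorphism; both varieties being normal and $\tilde\psi$ proper and birational, by Zariski's Main Theorem it suffices to prove that $\tilde\psi$ has finite fibers. I would establish this by reconstructing $b\in\bcg_n$ from the cycle $\psi(b)$. The key geometric input is that one of the two $\bC^*$-fixed endpoints of each orbit closure lies on the fixed divisor $\bcg_n\hookrightarrow\cA_n$, and equals the image of $b$ under the $\bC^*$-equivariant retraction $\cA_n^{\circ\circ}\to\bcg_n$; passing to the limit over the boundary, the support of $\psi(b)$ meets the closed immersion $\bcg_n\hookrightarrow\cA_n$ in a point whose preimage recovers $b$. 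Carrying this out requires stratifying $\bcg_n$ by level graphs and computing, stratum by stratum, the limit of the orbit-closure cycles as the generic line degenerates, matching the combinatorial type of the multiscale differential with the configuration of components of the limit $1$-cycle and the boundary strata of $\cA_n$ they occupy.

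I expect this last step, the boundary analysis showing $\psi$ is injective (equivalently, that no curve in $\bcg_n$ is contracted), to be the main obstacle, since it is where one must verify that the \emph{full} multiscale data of a boundary point, not just its generic part, is faithfully encoded in the degenerate orbit-closure cycle. A convenient alternative that sidesteps the case-by-case degeneration analysis would be to produce a morphism in the opposite direction: the universal $1$-cycle over $\cC(\cA_n)$ carries a distinguished orbit-closure component, and applying the extended retraction $\cA_n^{\circ\circ}\to\bcg_n$ to its generic point defines a rational, and then by normality regular, inverse $\cC(\cA_n)\to\bcg_n$ to $\tilde\psi$; checking that this is well defined on all of $\cC(\cA_n)$ is essentially the same difficulty repackaged.
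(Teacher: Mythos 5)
Your overall skeleton is the same as the paper's: use the resolution $\cR \to \bcg_n$ of \Cref{prop: resolution of A to B map} as a flat family of $1$-cycles in $\cA_n$, obtain a morphism $\bcg_n \to \Chow(\cA_n)$ whose generic members are orbit closures, note it is birational onto the Chow quotient, and conclude via normality and Zariski's main theorem. However, the step you explicitly defer --- injectivity of $\psi$ on \emph{all} closed points (equivalently, that no curve in $\bcg_n$ is contracted) --- is precisely the content of the paper's argument, and as written your proposal leaves it as a program (``stratifying $\bcg_n$ by level graphs and computing, stratum by stratum, the limit of the orbit-closure cycles'') rather than a proof. This is a genuine gap: without it you only have a finite birational morphism $\tilde\psi$ from a normal variety onto a closure you have not identified, and nothing yet rules out $\tilde\psi$ contracting boundary curves of $\bcg_n$ to points of $\cC(\cA_n)$.

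The paper closes this gap with no stratification or case analysis, using structure already established in \Cref{Section 2}. By \Cref{rem: regular locus}, the rational map $\phi_n \colon \cA_n \dashrightarrow \bcg_n$ is regular on the open set $U$ where all $\Pi_{ij} \neq 0$, and the commutativity of \eqref{diagram: big diagram} gives $\phi_n \circ \alpha = \lambda$ on $\alpha^{-1}(U)$. Since the section $\delta(\bcg_n)$ is contained in $U$, \emph{every} fiber $\cR_p = \lambda^{-1}(p)$, including those over the deepest boundary strata, meets $\alpha^{-1}(U)$; hence $\phi_n\bigl(\alpha(\cR_p) \cap U\bigr) = \{p\}$, so the cycle $\alpha(\cR_p)$ determines $p$ outright. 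This recovers your ``key geometric input'' in a uniform way: indeed, by \Cref{lem: no blowups near B} the map $\alpha$ is an isomorphism near $\delta(\bcg_n)$ and the chain $\cR_p$ of \Cref{lemma: blowing up divisors in a distinguished section} meets the proper transform of $P_0$ in a single point, so the cycle meets $\delta(\bcg_n)$ exactly at $\delta(p)$. The lesson is that the reconstruction of $p$ from its cycle does not require matching combinatorial types of degenerations at all; it follows formally from the regularity of $\phi_n$ near $\delta(\bcg_n)$ together with the fact that the blowup centers producing $\cR$ from $\cA_n$ are disjoint from $\delta(\bcg_n)$. (The paper does use the coordinate computation from \cite{augmentedstability}*{Lem. 4.4} for a different purpose than the one you would need it for: to identify $U/\bC^*$ with $\bcg_n^\circ$ and thus identify the image of $\chi$ with the Chow quotient itself.)
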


    See \cite{ChowquotientCstar} for a recollection of normalized Chow quotients by $\mathbb{C}^*$-actions.

    \begin{thm}[\Cref{thm: GIT quotient}]
        There is an ample line bundle $\mathcal{L}$ on $\cA_n$ that admits a $\bC^*$-linearization such that the resulting GIT quotient $\cA_n \sslash_{\mathcal{L}}\bC^*$ is isomorphic to $\bcg_n$.
    \end{thm}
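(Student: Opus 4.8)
The plan is to produce the linearization by a twisting procedure and then identify the resulting GIT quotient with $\bcg_n$ by comparing it to the normalized Chow quotient, which is already identified with $\bcg_n$ in \Cref{thm: normalized Chow quotient}. First I would fix any ample line bundle $\mathcal{H}$ on the projective variety $\cA_n$; since $\bC^*$ is connected and linearly reductive and $\cA_n$ is normal, a suitable power $\mathcal{H}^{\otimes m}$ carries a $\bC^*$-linearization. The set of $\bC^*$-linearizations of a fixed line bundle is a torsor under the character lattice $\bZ$, so I may twist the chosen linearization by $t \mapsto t^c$; this shifts every fixed-point weight by $c$ and lets me position the stability threshold. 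I would then set $\mathcal{L} = \mathcal{H}^{\otimes m}$ equipped with a generic such twist.

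The second step is the (semi)stability analysis via the Hilbert--Mumford criterion for $\bC^*$. Writing $F_1,\dots,F_k$ for the connected components of the fixed locus $\cA_n^{\bC^*}$ and $w_i$ for the weight with which $\bC^*$ acts on $\mathcal{L}|_{F_i}$, a point $x$ with limits $x_0=\lim_{t\to 0}t\cdot x\in F_i$ and $x_\infty=\lim_{t\to\infty}t\cdot x\in F_j$ is $\mathcal{L}$-semistable exactly when $w_i\le 0\le w_j$, and stable when both inequalities are strict. Because $\bcg_n$ sits inside $\cA_n^{\bC^*}$ as a divisor and the generic orbit is one-dimensional --- indeed $\dim\cA_n=\dim\bcg_n+1$ --- the action is, away from the boundary, precisely the fibration $\cA_n^{\circ\circ}\to\bcg_n^\circ$ by $\bC^*$-orbits appearing in \Cref{prop: resolution of A to B map}. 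Choosing the twist $c$ generically so that $0$ is not among the $w_i$ forces semistable to coincide with stable; then there are no strictly semistable points, the GIT quotient $\cA_n\sslash_{\mathcal{L}}\bC^*$ is a geometric quotient $\cA_n^{s}/\bC^*$, and it is normal and projective.

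For the identification I would invoke the principle that the normalized Chow quotient dominates every GIT quotient (see \cite{ChowquotientCstar}): there is a canonical surjective birational morphism
\[
    \pi\colon \bcg_n\;\cong\;\cC(\cA_n)\longrightarrow \cA_n\sslash_{\mathcal{L}}\bC^*,
\]
which restricts to the identity on the common dense open $\bcg_n^\circ=\cA_n^{\circ\circ}/\bC^*$. Both source and target are normal and projective of dimension $n-2$, so it suffices to prove that $\pi$ contracts no curve; being then finite and birational onto a normal variety, $\pi$ is an isomorphism by Zariski's main theorem. To rule out contraction I would show $\pi$ is injective on closed points: two distinct points of $\bcg_n$ represent two distinct closed $\bC^*$-orbits in $\cA_n^{s}$, and distinct closed orbits in the stable locus have distinct images in a geometric quotient. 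Here the explicit $\bC^*$-retraction of \Cref{prop: resolution of A to B map} identifies each stable orbit with the point of $\bcg_n$ to which it limits, so the fibers of $\pi$ are singletons.

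The main obstacle is the stability bookkeeping in the second step together with the no-contraction claim in the third. Concretely, one must enumerate the fixed components $F_i$ of the bottom-level scaling action --- not only $\bcg_n$ but also the boundary fixed loci coming from multiscale lines with several levels --- and check that a single generic twist $c$ simultaneously makes every semistable point stable and arranges the stable locus to be exactly the $\bC^*$-saturation of $\cA_n^{\circ\circ}$, compactified by $\bcg_n$. Equivalently, the delicate point is to verify that the dominant comparison map $\pi$ is a genuine isomorphism rather than a nontrivial birational contraction, i.e.\ that the chosen chamber is the one whose GIT quotient already agrees with the finest (normalized Chow) quotient.
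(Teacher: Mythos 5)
Your strategy --- force semistable $=$ stable by a character twist, then compare with the normalized Chow quotient of \Cref{thm: normalized Chow quotient} --- is genuinely different from the paper's proof, but it breaks down at exactly the point you flag as ``the main obstacle,'' and that gap is fatal as written. A \emph{generic} twist does not determine the quotient: the GIT quotient depends on which chamber of the variation-of-GIT fan the linearization lands in, and for $n\ge 4$ different chambers give non-isomorphic quotients, so the comparison map $\pi$ genuinely contracts curves in most of them. Concretely, besides $\bcg_n$ the fixed locus contains the interior fixed point $o\in\cA_n^{\circ}$ where all marked points collide (the point $q$ of \Cref{Ex:n=3}) and various intermediate fixed loci in the boundary, and along any chain of orbit closures the weights are strictly monotone. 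If the twist is chosen so that $0$ separates the weight of $o$ from the weights of all other fixed components, then the stable orbits are precisely those emanating from $o$, the geometric quotient is the space of directions $\bP(T_o\cA_n)\cong\bP^{n-2}$, and $\pi$ becomes the blowdown $\beta\colon Y_n\to H\cong\bP^{n-2}$, which is not an isomorphism once $n\ge4$ (for $n=4$ it contracts seven curves). Both this chamber and the one you actually need --- the one where $0$ separates $w_{\bcg_n}$ from all the other weights, so that the stable orbits are exactly those sinking in $\bcg_n$ --- consist of twists that are ``generic'' in your sense (no fixed weight vanishes), so genericity alone cannot yield the no-contraction claim. A secondary error compounds this: once semistable $=$ stable, the points of $\bcg_n$ are \emph{unstable}, being fixed points of nonzero weight, so they do not ``represent closed $\bC^*$-orbits in $\cA_n^{s}$''; the correct correspondence, available only in the good chamber, sends $p\in\bcg_n$ to the unique stable orbit in the Chow chain over $p$, namely the terminal orbit whose sink is $p$, and injectivity of $\pi$ then follows because an orbit has a unique sink.

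For contrast, the paper sidesteps chamber analysis entirely: it constructs the linearization inductively along the blowup sequence $\cA_n = X_m\to\cdots\to X_0=\bP^{n-1}$ via Kirwan's blowup lemma \cite{kirwanblowup}, starting from the explicit linearization on $\bP^{n-1}$ for which $H$ has weight $0$. That linearization lies on a \emph{wall}: the stable locus is empty, every semistable point is strictly semistable, and at each stage the quotient is identified with the proper transform of $H$, hence finally with $\bcg_n$. Your route can be repaired --- fix the chamber adjacent to this wall on the side where $w_{\bcg_n}>0>w_F$ for every other fixed component $F$, verify via weight monotonicity that each Chow chain then contains a unique stable orbit and that it sinks at the corresponding point of $\bcg_n$, and only then run your Zariski's-main-theorem argument --- but the chamber selection and this sink analysis are precisely the missing content, not bookkeeping.
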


Even when $\mu \ne (0^{n},-2)$, the moduli spaces of genus $0$ multiscale differentials $\bP\Xi\overline{M}_{0,n+1}(\mu)$ have many combinatorial similarities to $\bcg_n$, especially in regards to the stratification of the boundary. In particular, when $\mu=(-a,b_1,\ldots,b_n)$ or $(a,-b_1,\ldots,-b_n)$ for $a,b_i>0$, the boundary strata of these spaces are also in one-to-one correspondence with stable rooted level trees where the unique zero or the unique pole is contained in the root vertex. The main difference is that unlike $\bcg_n$, these spaces are smooth Deligne-Mumford stacks; consequently, their coarse moduli spaces can have orbifold singularities. 

In spite of these differences, we anticipate analogous results connecting the intersection theory on the moduli spaces $\bP\Xi\overline{\cM}_{0,n+1}(\mu)$ to the combinatorics of matroids or a suitable stacky refinement. Conversely, it is interesting to ask which wonderful models in the sense of \cite{deconciniprocesi1995wonderfulmodels} can be realized as moduli spaces of marked genus $0$ curves with differentials, with examples being provided thus far by $\overline{\cM}_{0,n+1}$ in \cite{KapranovChow}, $\cA_n$ in \cites{robotis2024spaces,zahariuc2024smallresolutionsmodulispaces}, and $\cB_n$ in the present work. In future work, we hope to strengthen this bridge between combinatorics and moduli spaces of curves by further studying the case of $\bP\Xi\overline{M}_{0,n+1}(\mu)$.

\subsection*{Acknowledgements} P.D. thanks his advisor Samuel Grushevsky for helpful discussions and support. The research of P.D was supported in part by NSF grant DMS-21-01631. A.R. thanks Daniel Halpern-Leistner for his support and for helpful conversations concern\-ing aspects of this paper and Maria Teresa for her love and encouragement. A.Z. was partially supported by an NSERC Discovery Grant.\blfootnote{\includegraphics[scale=0.3]{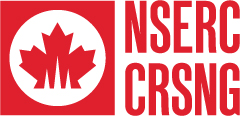} We acknowledge the support of the Natural Sciences and Engineering Research Council of Canada (NSERC), RGPIN-2020-05497. Cette recherche a \'{e}t\'{e} financ\'{e}e par le Conseil de recherches en sciences naturelles et en g\'{e}nie du Canada (CRSNG), RGPIN-2020-05497.}

\section{Wonderful models}
\label{Section 2}

We first introduce several subspace arrangements in projective space that play a key role in the sequel. Let $L_n$ be the set of partitions of $[n] = \{1,2,\ldots,n\}$. The set of blocks of $\sigma \in L_n$ is denoted by $B(\sigma)$. Let $\bot$ be the partition with $1$ block, and $\top$ the partition with $n$ blocks. Given a partition $\sigma$, we denote by $\sim_\sigma$ the induced equivalence relation on $[n]$. That is, $i\sim_\sigma j$ if $i$ and $j$ are in the same block of $\sigma$. Next, let $H$ be the hyperplane in $\bP^{n-1}$ defined by $X_1+\cdots + X_n = 0$. For $\sigma \in L_n$, let 
\[ 
    \Delta_\sigma = \{ [X_1:\cdots:X_n] \in \bP^{n-1}: X_i = X_j \text{ if $i \sim_\sigma j$} \} 
\]
be the corresponding \emph{polydiagonal} of $\bP^{n-1}$, and $\Delta^h_\sigma = \Delta_\sigma \cap H$. We define a pair of subspace arrangements of $\bP^{n-1}$ by 
\[ 
    \poly = \{ \Delta_\sigma: \sigma \in L_n \setminus \{\top \} \} \quad \text{and}  \quad \polyh = \{ \Delta^h_\sigma: \sigma \in L_n \setminus \{\bot \} \}.  
\]
Similarly, for each subset $S \subseteq [n]$ with at least two elements, we have a \emph{diagonal} $\Delta_S$ defined by $X_i=X_j$ for all $i,j \in S$, and $\Delta^h_S = \Delta_S \cap H$ if $S \neq [n]$. We define another subspace arrangement by 
\[ 
    \diagh = \{ \Delta_S^h: S \subset [n], 2 \leq |S| \leq n-1 \}. 
\]

The problem of compactifying the complement $\mathfrak{A}$ of an arrangement of subspaces $\cP$ in a vector space $V$ or its projectivization $\bP(V)$ has been long studied. In \cite{deconciniprocesi1995wonderfulmodels}, the notion of a \emph{wonderful model} $Y_{\mathfrak{A}}$ of a subspace arrangement in $\bP(V)$ is introduced. The authors define $Y_{\mathfrak{A}}$ to be the closure of the image of the map 
\[
    \mathfrak{A}\to \bP(V) \times \prod_{D\in \cP}\bP(V/D)
\]
where $\mathfrak{A} \to \bP(V)$ is the inclusion and $\mathfrak{A} \to \bP(V/D)$ is the composite $\mathfrak{A}\hookrightarrow \bP(V\setminus D) \to \bP(V/D)$ for each $D\in \cP$. When $\cP$ is a \emph{building set} \cite{li2009wonderfulcompactificationarrangementsubvarieties}*{Def. 2.2}, $Y_\mathfrak{A}$ admits a description as an iterated blowup: choose a total order $\le$ refining the containment partial order on $\cP$ and enumerate $\cP$ as $\{L_1,\ldots, L_N\}$ with respect to $\le$. Next, let $Y_0 := \bP(V)$, and if $Y_i$ has been constructed, define $Y_{i+1}$ to be the blowup of $Y_i$ along the strict transform of $L_{i+1}$. 

The output of this process is a smooth variety $Y_N =  Y_{\mathfrak{A}}$ such that the elements of $\cP$ correspond to divisors in $Y_{\mathfrak{A}}$ which give a simple normal crossings boundary divisor compactifying $\mathfrak{A}$. When $\cP$ arises as a collection of subspaces coming from a linear represen\-tation of a matroid, the corresponding wonderful model is called a \emph{wonderful variety} -- see \cite{bhmpw2022semismalldecompositionofchowringofmatroid}*{Rem. 2.13}. In \cite{li2009wonderfulcompactificationarrangementsubvarieties}, wonderful models are generalized to the non-linear setting of arrangements of subvarieties of a variety. We will not need the full generality of the setup of \cite{li2009wonderfulcompactificationarrangementsubvarieties} here, but we will frequently reference the results therein. In what follows, we will consider the following wonderful models.
\renewcommand{\arraystretch}{1.4}
\begin{center}
	\begin{tabular}{c|c|c}
		wonderful model & building set & ambient space \\ \hline
		$\overline{M}_{0,n+1}$ & $\diagh$ & $H \cong \bP^{n-2}$ \\
		$Y_n$ & $\polyh \setminus \{ \Delta^h_\top \}$ & $H \cong \bP^{n-2}$ \\
		$\mathrm{Bl}_{\Delta_\bot}\bP^{n-1}$ & $\{\Delta_\bot\}$ & $\bP^{n-1}$ \\
		$\cA_n$ & $\polyh$ & $\bP^{n-1}$ \\
		$P$ & $\poly$ & $\bP^{n-1}$ \\
		$\cR$ & $\polyh \cup \poly$ & $\bP^{n-1}$ \\
	\end{tabular}
\end{center}
The fact that $\overline{M}_{0,n+1}$ is the wonderful model of $\diagh$ follows from \cite{KapranovChow}*{Thm. 4.3.3}, while the fact that $\cA_n$ is the wonderful model of $\polyh$ in $\bP^{n-1}$ follows from \cite{robotis2024spaces}*{Thm. 4.7}. The other cases are definitions. Note that, with the exception of $\diagh$, all building sets coincide with their induced arrangements; that is, the wonderful models are maximal.

\begin{lem}\label{lem: Y is wonderful variety}
	   The wonderful model $Y_n$ coincides with the \emph{wonderful variety} constructed in \cite{bhmpw2022semismalldecompositionofchowringofmatroid}*{Rem. 2.13}, when the matroid $M$ in \emph{loc. cit.} is the graphic matroid $M(K_n)$ of the complete graph, and $M(K_n)$ is represented by a positive root system of $\mathfrak{sl}_n$. 
\end{lem}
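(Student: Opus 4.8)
The plan is to unwind both constructions and exhibit them as De~Concini--Procesi wonderful models of the \emph{same} arrangement, with the same (maximal) building set, inside the same ambient projective space. First I would interpret the representation of $M(K_n)$ by the positive root system $\{e_i - e_j : 1 \le i < j \le n\}$ of $\mathfrak{sl}_n$ as taking these roots to be the linear functionals cutting out the braid arrangement: the edge $\{i,j\}$ of $K_n$ is assigned the hyperplane $\{x_i = x_j\} = \{\langle e_i - e_j, x\rangle = 0\}$ in $\bC^n$. Passing to the essential quotient $V = \{x : \sum_i x_i = 0\}$ and projectivizing identifies the ambient space of the resulting arrangement with $\bP(V) = H \cong \bP^{n-2}$, which is precisely the projective space in which $Y_n$ is built.

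Next I would match the flats. The flats of the graphic matroid $M(K_n)$ are in bijection with partitions $\sigma \in L_n$, the flat attached to $\sigma$ being the set of edges $\{i,j\}$ with $i \sim_\sigma j$ obtained by completing each connected component to a clique. Under the root realization, the linear subspace of $\bP(V)$ cut out by the corresponding hyperplanes is $\bigcap_{i \sim_\sigma j}\{x_i = x_j\} \cap H = \Delta_\sigma \cap H = \Delta^h_\sigma$. The empty flat corresponds to $\sigma = \top$ (for which $\Delta^h_\top = H$ is the whole ambient space) and the top flat $E(K_n)$ to $\sigma = \bot$ (for which $\Delta^h_\bot = \varnothing$), so the proper nonempty flats correspond exactly to $\sigma \in L_n \setminus \{\bot, \top\}$. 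Consequently the maximal building set of all proper nonempty flats of $M(K_n)$ coincides with $\polyh \setminus \{\Delta^h_\top\}$, the building set defining $Y_n$.

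Since the wonderful variety of \cite{bhmpw2022semismalldecompositionofchowringofmatroid}*{Rem. 2.13} is by definition the De~Concini--Procesi wonderful model of this arrangement with respect to the maximal building set, and $Y_n$ is the wonderful model of the identical arrangement with the identical building set in the identical ambient space, the two varieties agree. The only genuine point requiring care is the primal/dual bookkeeping: one must confirm that ``representation by the positive root system'' produces the polydiagonal arrangement, i.e.\ flats realized as \emph{intersections} of hyperplanes, rather than the orthogonal-complement picture in which a flat of $\sigma$ would be the rank $n - |B(\sigma)|$ span of the vectors $e_i - e_j$. Since $\Delta^h_\sigma$ has projective dimension $|B(\sigma)| - 2$ and is the annihilator of that span, these are distinct subspaces, and it is the former that enters the construction once the roots are read as functionals; with this identification fixed, matching the two building sets and verifying that both models are maximal is a direct check.
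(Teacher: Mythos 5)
Your proof is correct and is essentially the paper's own argument: both unwind the setup of \cite{bhmpw2022semismalldecompositionofchowringofmatroid}*{Rem. 2.13} for $M(K_n)$ with the root representation, identify the ambient space $\bP(V)$ with $H \cong \bP^{n-2}$ (the paper does this by exhibiting $V$ as the subspace of $\bC^{\binom{n}{2}}$ cut out by the triangle relations $x_{\{i,j\}}-x_{\{i,k\}}+x_{\{j,k\}}=0$ and identifying $V \cong \bC^n/\bC$, which is the same identification as your essential quotient of the braid arrangement), and match the subspaces $H_F$ attached to proper nonempty flats with the polydiagonals $\Delta^h_\sigma$ for $\sigma \in L_n\setminus\{\bot,\top\}$, so that both varieties are the De Concini--Procesi model of the same arrangement with the same maximal building set. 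Your extra care with the flat--partition bijection and the primal/dual bookkeeping (flats as intersections of hyperplanes in $V$, not spans of roots in $V^\dual$) is exactly the point the paper compresses into ``the linear subspaces $H_F$ correspond to elements of $\polyh$,'' so there is no difference in substance.
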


    To clarify the statement, recall that the setup from \emph{loc. cit.} involves a matroid $M$ on a set $E$, and a vector subspace $V \subseteq \bC^E$. The dual $\bC^E \twoheadrightarrow V^\dual$ maps the standard basis of $\bC^E$ to $|E|$ vectors in $V^\dual$, so the choice of the subspace $V$ corresponds to a representation of $M$.  
	
	\begin{proof} 
	We embed $\bC \hookrightarrow \bC^n$ diagonally. The section $\bC^n/\bC \to \{ x_1+\cdots+x_n = 0 \} \subset  \bC^n$ of $\bC^n \to \bC^n/\bC$ defined by sending a class to its unique representative with centroid zero gives an identification $\bP(\bC^n/\bC) \cong H$. The setup in the statement of the lemma amounts to taking $V \subset \bC^{\binom{n}{2}}$ given by $x_{\{i,j\}} - x_{\{i,k\}} + x_{\{j,k\}} = 0$ for all $1 \leq i<j<k \leq n$
    in the setup of \cite{bhmpw2022semismalldecompositionofchowringofmatroid}*{Rem. 2.13}. Then $V \cong \bC^n/\bC$, so $\bP V \cong \bP(\bC^n/\bC) \cong H$. With the notation in \emph{loc. cit.}, $M = M(K_n)$, the linear subspaces $H_F$ correspond to elements of $\polyh$, and hence $\underline{X}_V = Y_n$. This proof is analogous to the augmented case in \cite{zahariuc2024smallresolutionsmodulispaces}*{\S2.2}.
	\end{proof}

    \begin{rem}
        More intrinsically, one can take the $(n-1)$-dimensional projective space to be $\bP(V\oplus \bC)$ for $V = \bC^n/\bC$ and the hyperplane $H$ to be $\bP(V) \hookrightarrow\bP(V\oplus \bC)$ defined by $t = 0$ for $t\in (V\oplus \bC)^\vee$ dual to $0\oplus \bC$. Subject to these identifications, the elements of $\poly$ are defined by the vanishing of various linear forms $\Pi_{ij} = X_j - X_i \in (\bC^n/\bC)^\vee$ and the elements of $\polyh$ are defined similarly with the added condition of lying in $\bP(V)$, i.e. $t=0$. This more intrinsic formulation does not play a major role in this paper, however it explains the sense in which the space $\cA_n$ is a compactification of $\bC^n/\bC$ -- see \cites{robotis2024spaces,zahariuc2024smallresolutionsmodulispaces}.

        This perspective also clarifies why $\overline{M}_{0,n+1}$ is obtained by blowing up $\bP^{n-2}\cong \bP(V)$. Indeed, modulo automorphisms, a generic $(n+1)$-marked $\bP^1$ is equivalent to a configuration $(p_1,\ldots, p_n,\infty)$ with $p_i\in \bC$ for $i=1,\ldots, n$ considered up to translation and scaling -- i.e. as an element of $\bP(V)$. Upon blowing up the subspaces of $\bP(V)$ given by $\diagh$, which correspond to collision of marked points, one obtains $\overline{M}_{0,n+1}$ as in \cite{KapranovChow}.
    \end{rem}

    We will next construct a diagram as follows:
    \begin{equation}
    \label{diagram: big diagram}
        \begin{tikzcd}
            &&\bP(\cO_H\oplus \cO_H(1))\arrow[d,equal]\\
            \cR\arrow[r,"\nu"]\arrow[dr,"\lambda"]\arrow[d,"\alpha",swap]&P\arrow[r,"\gamma"]\arrow[d,"\rho"]& \Bl_{\Delta_\bot}\bP^{n-1}\arrow[d,"\varpi"]\\
            \cA_n\arrow[r,dashed,"\phi_n"]&Y_n\arrow[d,"\mu_n"]\arrow[r,"\beta"]\arrow[l,bend left,"\delta"]&H\\
            &\overline{M}_{0,n+1}\arrow[ur,"\kappa",swap]&
        \end{tikzcd}
    \end{equation}
The morphisms $\alpha,\nu,\gamma,\beta,\kappa,\mu_n$ in \eqref{diagram: big diagram}, all of which are birational, are defined using the same idea relying on \cite{li2009wonderfulcompactificationarrangementsubvarieties}*{Thm. 1.3}, as we explain now. Let us focus on $\nu$ and $\gamma$ first, for instance. If we order the building set of $\cR$ as $\Delta_\bot$, then $\poly \setminus \{ \Delta_\bot \}$ in order of increasing dimension (though there is some flexibility in these choices), followed by $\polyh$, then condition $(*)$ in \cite{li2009wonderfulcompactificationarrangementsubvarieties}*{Thm. 1.3} is satisfied. Thus, if we start with $\bP^{n-1}$ and blow up the dominant transforms in this order, we will obtain $\cR$ by \cite{li2009wonderfulcompactificationarrangementsubvarieties}*{Thm. 1.3}. However, by definition, during this process we will see $\mathrm{Bl}_{\Delta_\bot}\bP^{n-1}$ and $P$, thus giving the birational morphisms $\nu$ and $\gamma$. The other birational morphisms are obtained similarly. To construct $\alpha:\cR \to \cA_n$, we enumerate $\polyh \cup \poly$ as $\polyh$ first and $\poly$ second both in order of increasing dimension, and recall that blowing up $\polyh$ yields $\cA_n$. For $\mu_n$, $\kappa$, and $\beta$, we enumerate $\polyh \setminus \{ \Delta^h_\top \}$ as $\diagh$ first, then everything left over, and note that condition $(*)$ in \cite{li2009wonderfulcompactificationarrangementsubvarieties}*{Thm. 1.3} is still satisfied.

The map $\varpi$ is simply projection from the point $[1:\cdots:1] \in \bP^{n-1}$, which exhibits the blowup at this point as a $\bP^1$-bundle over $H$. The construction of $\rho$ amounts to the statement that $P$ is the pullback of this $\bP^1$-bundle by $\beta:Y_n \to H$, i.e.
    \begin{equation}
    \label{equation: pullback of p1-bundle}
        P = Y_n \times_H \mathrm{Bl}_{\Delta_\bot}\bP^{n-1} = \bP({\sh O}_{Y_n} \oplus \beta^*{\sh O}_H(1)).
    \end{equation}	
To justify this, note that $\beta$ and $\gamma$ are compositions of sequences of blowups indexed by $L_n \setminus \{\bot,\top\}$, and, at each step during these mirror processes, the locus blown up in the respective blowup of $\mathrm{Bl}_{\Delta_\bot} \bP^{n-1}$ is the preimage of the locus blown up in the corresponding blowup of $H$, so the claim is clear inductively. Set $\lambda = \rho \circ \nu$. Finally, $\phi_n$ is merely a rational map which will be studied from a modular perspective later, so for now we simply describe it as projection from $[1:\cdots:1]$.

This completes the construction of \eqref{diagram: big diagram} with the exception of the closed immersion $\delta$, which will be constructed below. Note also that, if $\delta$ is removed from the diagram, the diagram is commutative essentially by construction, since all relevant compositions trivially agree at the generic point of the source. 

\begin{rem}\label{rem: regular locus}
    The rational map $\phi_n$ is well defined away from the proper transforms of the diagonals of $\bP^{n-1}$ on $\cA_n$. Indeed, $\alpha$ was obtained by iteratively blowing up (dominant transforms of) proper transforms on $\cA_n$ of polydiagonals in $\bP^{n-1}$, so $\phi_n$ can be identified with $\lambda$ in the complement of the union of the proper transforms on $\cA_n$ of polydiagonals, which coincides with the union of the proper transforms on $\cA_n$ of diagonals in $\bP^{n-1}$. 
\end{rem}

Let $\cC = \overline{M}_{0,n+2} \times_{\overline{M}_{0,n+1}} Y_n$ be the family of curves over $Y_n$ obtained by pulling back the universal family over $\overline{M}_{0,n+1}$, and let $y_1,\ldots,y_{n+1}: Y_n \to \cC$ be the $n+1$ marked sections. 

\begin{lem}\label{lem: description of P^1-bundle}
    We have $P \cong \bP({\sh O}_{Y_n} \oplus y_{n+1}^*\omega_{\cC/Y_n})$ as $\bP^1$-bundles over $Y_n$.
\end{lem}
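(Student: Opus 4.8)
The plan is to bootstrap from the isomorphism already recorded in \eqref{equation: pullback of p1-bundle}, namely $P \cong \bP(\cO_{Y_n} \oplus \beta^*\cO_H(1))$. Recall that for any line bundle $\cL$ on $Y_n$ one has $\bP(\cO_{Y_n}\oplus\cL) \cong \bP(\cO_{Y_n}\oplus\cL^{-1})$, since tensoring the rank-two bundle by $\cL^{-1}$ and reordering the summands leaves the projectivization unchanged. Consequently it suffices to establish an isomorphism of line bundles
\[
    \beta^*\cO_H(1) \cong y_{n+1}^*\omega_{\cC/Y_n}
\]
up to inversion, after which the lemma follows immediately.

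First I would reduce both sides to pullbacks along $\mu_n \colon Y_n \to \overline{M}_{0,n+1}$. On the left, commutativity of \eqref{diagram: big diagram} gives $\beta = \kappa\circ\mu_n$, so $\beta^*\cO_H(1) = \mu_n^*\kappa^*\cO_H(1)$. On the right, $\cC$ is by definition the base change of the universal curve $\overline{M}_{0,n+2}\to\overline{M}_{0,n+1}$ along $\mu_n$, and $y_{n+1}$ is the pullback of the universal section $\sigma_{n+1}$; since the relative dualizing sheaf is compatible with base change, this yields $y_{n+1}^*\omega_{\cC/Y_n} = \mu_n^*\bL_{n+1}$, where $\bL_{n+1}\coloneqq \sigma_{n+1}^*\omega_{\overline{M}_{0,n+2}/\overline{M}_{0,n+1}}$ is the cotangent line bundle at the $(n+1)$-st marked point, i.e. the line bundle with $c_1(\bL_{n+1}) = \psi_{n+1}$.

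It then remains to identify $\kappa^*\cO_H(1)$ with $\bL_{n+1}$ on $\overline{M}_{0,n+1}$. This is precisely Kapranov's description of $\overline{M}_{0,n+1}$ as an iterated blowup of $\bP^{n-2}$ \cite{KapranovChow}: the contraction $\kappa$ to $H\cong\bP^{n-2}$ is the morphism attached to the complete linear system $|\psi_{n+1}|$ based at the distinguished marked point, so it pulls the hyperplane class back to $\bL_{n+1}$. That the distinguished point is $p_{n+1}=\infty$ is forced by the building set: the elements of $\diagh$ record collisions among $p_1,\ldots,p_n$ only and never involve $\infty$, so $\kappa$ is exactly Kapranov's map based at $p_{n+1}$. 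Combining the three identities gives $\beta^*\cO_H(1)\cong y_{n+1}^*\omega_{\cC/Y_n}$ up to inversion, hence $P \cong \bP(\cO_{Y_n}\oplus y_{n+1}^*\omega_{\cC/Y_n})$.

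I expect the only delicate point to be the bookkeeping in the Kapranov identification: confirming that the structure morphism $\kappa$ of the wonderful model is the $\psi_{n+1}$-map rather than one based at a different point, and tracking whether the hyperplane class pulls back to $\bL_{n+1}$ or to its dual. The second ambiguity is harmless thanks to the relation $\bP(\cO\oplus\cL)\cong\bP(\cO\oplus\cL^{-1})$, and the first is settled by the observation above that the blown-up loci $\diagh$ involve only the points $p_1,\ldots,p_n$.
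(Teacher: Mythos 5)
Your proposal is correct and follows essentially the same route as the paper's proof: both reduce to the identity $\beta^*\cO_H(1) \cong y_{n+1}^*\omega_{\cC/Y_n}$ via the factorization $\beta = \kappa\circ\mu_n$, Kapranov's identification of $\kappa^*\cO_H(1)$ with the cotangent line $\psi_{n+1}$-bundle at the $(n+1)$-st marked point, and base-change compatibility of the relative dualizing sheaf, before invoking \eqref{equation: pullback of p1-bundle}. The only cosmetic difference is that the paper argues with first Chern classes (using that $c_1:\mathrm{Pic}(Y_n)\to \mathrm{CH}^1(Y_n)$ is an isomorphism) and pins down the sign via the citation to Kapranov, which makes your $\bP(\cO\oplus\cL)\cong\bP(\cO\oplus\cL^{-1})$ safety net unnecessary, though harmless.
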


\begin{proof}
    Let $x_{n+1}$ be the last marked section of $\overline{M}_{0,n+2} \to \overline{M}_{0,n+1}$. By \cite{KapranovChow}*{\S 4.2.3}, we have $\kappa^*c_1({\sh O}_H(1)) = \psi_{n+1} \in \rm{CH}^1(\overline{M}_{0,n+1})$, where $\psi_{n+1}$ is the $\psi$-class corresponding to $x_{n+1}$, so
    \[ 
        c_1(\beta^*{\sh O}_H(1)) = \mu_n^*\kappa^*c_1({\sh O}_H(1)) = \mu_n^*\psi_{n+1} = \mu_n^*c_1(x_{n+1}^* \omega_{\overline{M}_{0,n+2}/\overline{M}_{0,n+1}}) = c_1(y_{n+1}^*\omega_{\cC/Y_n}), 
    \]
    which implies $\beta^*{\sh O}_H(1) \cong y_{n+1}^*\omega_{\cC /Y_n}$ since $c_1:\mathrm{Pic}(Y_n) \to \mathrm{CH}^1(Y_n)$ is an isomorphism, and the conclusion follows from \eqref{equation: pullback of p1-bundle}. 
\end{proof}

	For clarity, we state some well-known facts about compactifications of line bundles and blowups separately, in more natural generality.
	
	\begin{lem}
    \label{lemma: blowing up divisors in a distinguished section}
		Let $X$ be a smooth variety, ${\sh L}$ an invertible ${\sh O}_X$-module, and let $\overline{L} = \bP({\sh O}_X \oplus {\sh L}) \to X$ be the compactification of this line bundle. Let $X_0,X_\infty \subset \overline{L}$ be the two distinguished sections of $\overline{L}$.  
		
		Let $D \subset X$ be a snc divisor on $X$, and $D_0 \subset \overline{L}$ the image of $D$ by $X \cong X_0$. Let $Z$ be the blowup of $\overline{L}$ at the irreducible components of $D_0$ in a given order, and $\pi: Z \to X$ the composite morphism. Then, the following hold:
		\begin{enumerate}
			\item The morphism $\pi$ is flat, and for any $x \in X(\bC)$, $\pi^{-1}(x)$ is a chain of $c_x+1$ rational curves, where $c_x$ is the number of irreducible components of $D$ which contain $x$. 
			\item There exists a (unique) $\bC^*$-action on $Z$ over $X$, such that the birational morphism $Z \to \overline{L}$ is $\bC^*$-equivariant relative to this action and the natural $\bC^*$-action on $\overline{L}$ which fixes $X_0$ and $X_\infty$ pointwise. 
		\end{enumerate}
	\end{lem}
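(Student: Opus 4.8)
The plan is to reduce both parts to a standard local computation on a trivialized $\bP^1$-bundle, and then to invoke two general principles: miracle flatness for part (1) and the equivariance of blowups along invariant centers for part (2). Since the assertions are local on $X$ and the snc hypothesis is preserved under étale or formal localization, I would first pass to a neighborhood in which $\overline{L} \cong X \times \bP^1$ with $X_0 = X \times \{0\}$ and $X_\infty = X \times \{\infty\}$, and in which the components of $D$ through a fixed point are coordinate hyperplanes $\{t_i = 0\}$ in local coordinates $(t_1,\ldots,t_m)$ on $X$. The centers $D_0^{(i)} = D^{(i)} \times \{0\}$ are then smooth, and the snc condition guarantees that their successive strict transforms remain smooth and meet transversally, so that $Z$ is smooth as an iterated blowup of a smooth variety along smooth centers. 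Granting this, flatness of $\pi$ follows from miracle flatness once every fiber is shown to be one-dimensional: $Z$ is Cohen--Macaulay (being smooth), $X$ is regular, and $\dim Z - \dim X = 1$.

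The heart of the argument is the identification of $\pi^{-1}(x)$. Fixing $x \in X$, I would first observe that blowing up the strict transform of any $D_0^{(j)}$ with $x \notin D^{(j)}$ leaves $\pi^{-1}(x)$ untouched, since such centers lie entirely over $D^{(j)}$; this reduces the analysis to the $c_x$ components passing through $x$. I would then carry out the explicit blowup of the codimension-two center $\{t_i = 0,\, s = 0\}$ in coordinates $(t_1,\ldots,t_m,s)$, where $s$ is the affine coordinate on $\bP^1$ near $X_0$. In the two standard charts of the blowup, one checks that the fiber $\bP^1$ is replaced by the union of its strict transform and the exceptional $\bP^1$, meeting transversally at a single point, while the strict transform of $X_0$ now meets the new exceptional curve at its opposite end. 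Inducting on the blowups, the strict transform of $X_0$ always meets the current chain in one point, which is precisely where the next center (a strict transform of $D_0^{(i)}$, still a smooth divisor in $X_0 \cong X$) meets the fiber; blowing it up attaches one further $\bP^1$ there. After the $c_x$ relevant blowups one obtains a chain of $c_x + 1$ rational curves, giving the one-dimensionality needed to finish part (1).

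For part (2), the natural action is the fiberwise scaling on $\overline{L} = \bP({\sh O}_X \oplus {\sh L})$ that dilates the ${\sh L}$-summand; it fixes $X_0$ and $X_\infty$ pointwise. Since each center $D_0^{(i)}$ lies in the pointwise-fixed section $X_0$, it is $\bC^*$-invariant, and its strict transforms remain invariant. Because blowing up a $\bC^*$-invariant ideal sheaf carries a canonical lift of the action for which the blowdown is equivariant, the action lifts step by step to $Z$, producing a $\bC^*$-action over $X$ for which $Z \to \overline{L}$ is equivariant. Uniqueness is immediate: $Z \to \overline{L}$ is an isomorphism over the dense open complement of the centers, so any two lifts agree on a dense open subscheme of $\bC^* \times Z$ and hence coincide, the source being reduced and the target separated.

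The main obstacle is the bookkeeping in the second paragraph: verifying that at every stage the strict transform of $X_0$ meets the current fiber in a single point lying on the most recently created exceptional curve, that the next center passes through exactly that point, and that it remains a smooth codimension-two subvariety meeting the fiber transversally there. Keeping careful track of which chart the attaching point occupies, and confirming that it sits at the end of the chain away from the strict transform of the original fiber, is what guarantees that one obtains a chain rather than a more degenerate configuration, and that exactly one new component is created per blowup.
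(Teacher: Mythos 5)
Your proof is correct, and its skeleton matches the paper's: both deduce flatness from miracle flatness once all fibers are known to be one-dimensional, and both induct on the number of components of $D$ passing through a point. The inductive mechanisms, however, are genuinely different. The paper's key device is structural: after blowing up one component $D^{(1)}_0$, it identifies the complement in $Z$ of $X_\infty$ and of the strict transform $F$ of $\overline{L}|_{D^{(1)}}$ with the total space $\spec_X \bigoplus_{k \leq 0} {\mathcal L}(-D^{(1)})^{\otimes k}$ of the twisted line bundle, whose zero section is the strict transform of $X_0$ and which contains all remaining centers. Each blowup therefore reproduces the original hypotheses verbatim with ${\mathcal L}$ replaced by ${\mathcal L}(-D^{(1)})$, so the induction requires no chart bookkeeping beyond the single-component case, and the same identification carries the $\bC^*$-action along. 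You instead perform the bookkeeping explicitly in coordinates, tracking that the strict transform of $X_0$ meets each fiber at the free end of the current chain and that the next center passes through exactly that point; this is indeed the delicate step, and your verification of it is sound --- in particular your parenthetical observation that the strict transform of $X_0$ maps isomorphically to $X_0$, so that the successive centers are just the divisors $D^{(i)}$ sitting inside it, is precisely what makes the argument work in any order of blowups. For part (2) your route is arguably cleaner than the paper's: you lift the global fiberwise scaling action through each blowup using invariance of the centers and functoriality of blowing up, with uniqueness by density of the locus where $Z \to \overline{L}$ is an isomorphism, whereas the paper performs a local-coordinate calculation and then uses uniqueness to glue over a trivializing cover. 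In short: same strategy, but the paper's twist identification buys a conceptual, computation-free induction, while your version buys self-containedness at the cost of the chart analysis you flagged, plus a tidier treatment of the equivariance.
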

	
	\begin{proof}
		Note that flatness of $\pi$ follows from the second part of (1) by the miracle flatness theorem \cite{stacks-project}*{\href{https://stacks.math.columbia.edu/tag/00R4}{Tag 00R4}}. Let $L = \spec_X \bigoplus_{k \leq 0} {\sh L}^{\otimes k}$ be the line bundle corresponding to ${\sh L}$, so that $X_0$ is the zero section of $L$ and $L= \overline{L} \setminus X_\infty$. 
		
		Let us first consider the case when $D$ is irreducible. The first claim is trivial in this case. The second claim is a standard calculation in local coordinates, or on an affine open cover of $X$ which trivializes ${\sh L}$. Indeed, uniqueness of the $\bC^*$-action ensures gluing. Let $E \subset Z$ be the exceptional divisor, and $F \subset Z$ the other irreducible component of $\pi^{-1}(D)$. Note that
		\[ 
            Z \setminus (X_\infty \cup F) \cong \spec_X \bigoplus_{k \leq 0} {\sh L}(-D)^{\otimes k}. 
        \]
		This allows us to continue inductively, and deduce the claim in general, for any number of irreducible components of $D$. 
	\end{proof}
	
	Let $H' \cong H$ be the exceptional divisor of $\varpi$, so that $H'$ and $H$ are the distinguished sections of $\varpi$. Let $P_0 = \gamma^{-1}(H) \cong Y_n$ and $P_\infty = \gamma^{-1}(H') \cong Y_n$. Recall that, by definition, $\nu$ is the composition of the sequence of blowups of the dominant transforms of $\polyh$, after $\poly$ has been blown up to obtain $P$. However, the dominant transforms of $\polyh$ on $P$ are precisely the images of the boundary divisors on $Y_n$ by $Y_n \cong P_0$, which intersect transversally by \cite{li2009wonderfulcompactificationarrangementsubvarieties}*{Thm. 1.1}. Hence, we are in the situation of \Cref{lemma: blowing up divisors in a distinguished section}, with $Y_n$, $\beta^*{\sh O}_H(1)$, and the boundary of $Y_n$ in the roles of $X$, ${\sh L}$, and $D$ respectively. Hence, by \Cref{lemma: blowing up divisors in a distinguished section},
	\begin{itemize}
		\item $\lambda$ is flat, and for any $q \in Y_n(\bC)$, $\lambda^{-1}(q)$ is a chain of $c_q+1$ rational curves, where $c_q$ is the codimension of the stratum of $Y_n$ which contains $q$; and 
		\item there is a unique $\bC^*$-action on $\cR$ over $Y_n$ compatible with the natural $\bC^*$-action on $P$.
	\end{itemize}
	Therefore, we obtain a morphism
	\begin{equation} Y_n \longrightarrow \mathrm{Chow}_\xi(\cR) \end{equation}
	for suitable $\xi \in \rm{H}_2(\cR,{\bb Z})$, whose image consists of $\bC^*$-invariant $1$-cycles on $\cR$. 

    To complete the diagram \eqref{diagram: big diagram}, $\delta$ is obtained by mapping the proper transform of the section $P_0 \subset P$ on $\cR$, which is still isomorphic to $Y_n$, to $\cA_n$ by $\alpha$. Note that the exceptional locus of $\alpha$ is disjoint from the proper transform of $P_0$ on $\cR$. Indeed, given our construction of $\alpha$, its exceptional locus is a union of dominant transforms of polydiagonals from $\poly$ on $\cR$. However, by \cite{li2009wonderfulcompactificationarrangementsubvarieties}*{Thm. 1.1} the dominant transform on $\cR$ of any polydiagonal in $\poly$ is disjoint from the dominant (or proper) transform of $H$, which coincides with the proper transform of $P_0$ considered above. In particular, $\delta$ is a closed immersion and $Y_n$ is isomorphic to a divisor on $\cA_n$. For future reference, we also record the following fact, which is automatic from the discussion in this paragraph.

    \begin{lem}\label{lem: no blowups near B}
        The morphism $\alpha$ is an isomorphism in a Zariski neighborhood of $\delta(Y_n)$.
    \end{lem}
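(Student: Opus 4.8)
The plan is to leverage the two facts already assembled in the paragraph preceding the statement: that $\alpha$ is, by construction, a composition of blowups along smooth centers, and that the relevant copy of $Y_n$ inside $\cR$ is disjoint from every center. First I would fix notation for the birational morphism $\alpha\colon \cR \to \cA_n$. Recall that it is obtained from $\cA_n$ (the blowup of $\bP^{n-1}$ along $\polyh$) by successively blowing up the dominant transforms of the polydiagonals in $\poly$, taken in order of increasing dimension; in particular $\alpha$ is proper and birational. Let $E \subset \cR$ denote its exceptional locus, i.e.\ the union of these dominant transforms (equivalently, the union of the exceptional divisors of the successive blowups), and set $B := \alpha(E) \subseteq \cA_n$. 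Since $\alpha$ is proper and $E$ is closed, $B$ is closed; and since each individual blowup along a smooth center is an isomorphism away from that center, carrying the complement of the exceptional divisor isomorphically onto the complement of the center, the composite restricts to an isomorphism
\[
    \alpha\colon \cR \setminus E \xrightarrow{\ \sim\ } \cA_n \setminus B, \qquad \text{with } \alpha^{-1}(B) = E \text{ set-theoretically.}
\]

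It then suffices to prove the containment $\delta(Y_n) \subseteq \cA_n \setminus B$, for then $\cA_n \setminus B$ is an open neighborhood of $\delta(Y_n)$ on which $\alpha$ is an isomorphism, which is exactly the assertion. To this end I would recall that, by the definition of $\delta$, we have $\delta(Y_n) = \alpha(\widetilde{P_0})$, where $\widetilde{P_0} \cong Y_n$ is the proper transform of the section $P_0 \subset P$ under $\nu$, and that $\widetilde{P_0}$ coincides with the proper (equivalently, dominant) transform of $H$ on $\cR$. By \cite{li2009wonderfulcompactificationarrangementsubvarieties}*{Thm. 1.1}, the dominant transform on $\cR$ of each polydiagonal in $\poly$ is disjoint from the dominant transform of $H$; taking the union over all of $\poly$ gives $\widetilde{P_0} \cap E = \emptyset$, i.e.\ $\widetilde{P_0} \subseteq \cR \setminus E$. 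Applying $\alpha$ and invoking the isomorphism above yields $\delta(Y_n) = \alpha(\widetilde{P_0}) \subseteq \alpha(\cR \setminus E) = \cA_n \setminus B$, as needed.

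The one step meriting care—and the only plausible obstacle—is the set-theoretic identity $\alpha^{-1}(B) = E$, which is what allows the disjointness $\widetilde{P_0}\cap E = \emptyset$ upstairs to descend to the disjointness $\delta(Y_n) \cap B = \emptyset$ downstairs. This is the standard behavior of a composition of blowups along smooth centers: at each stage the preimage of the center is precisely the exceptional divisor, and over the complement of the image of the total exceptional locus the map is an isomorphism. I expect this to present no genuine difficulty, since the essential geometric input, namely the disjointness of the dominant transforms of the elements of $\poly$ from the dominant transform of $H$, was already furnished by Li's theorem in the discussion above; the lemma is then immediate, which is why it was recorded as an automatic consequence.
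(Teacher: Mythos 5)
Your proof is correct and takes essentially the same route as the paper's: the paper likewise observes that the exceptional locus of $\alpha$ is a union of dominant transforms on $\cR$ of polydiagonals in $\poly$, invokes Li's Thm.~1.1 to conclude these are disjoint from the proper transform of $P_0$ (equivalently, the dominant transform of $H$), and records the lemma as automatic from that discussion. Your only addition is to spell out the descent/saturation step $\alpha^{-1}(B)=E$, which makes the disjointness upstairs yield an honest Zariski neighborhood downstairs and which the paper leaves implicit.
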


\section{Moduli of multiscale differentials as wonderful variety}
Throughout this section and the rest of the paper, $\bcg_n$ denotes the moduli space of multiscale differentials $\bP\Xi\overline{M}_{0,n+1}(0^n,-2)$ as introduced in \cite{bcggm}. The aim of this section is to prove:
    \begin{thm}
        \label{multiscale_wonderful}
        The moduli space of multiscale differentials $\bcg_n$ is isomorphic to the wonderful variety $Y_n$.
    \end{thm}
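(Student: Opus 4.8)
The plan is to construct a morphism $g \colon \bcg_n \to Y_n$ extending the tautological identification of the dense open loci, and then to promote it to an isomorphism by a purity argument on the boundary divisors. First I would identify the common open stratum. A point of the configuration locus $H^\circ \subset H$ records a class $(\mathbb{P}^1, x_{n+1}, \mathrm{d}z, p_1, \ldots, p_n)$ with $p_1, \ldots, p_n \in \mathbb{C} = \mathbb{P}^1 \setminus \{x_{n+1}\}$ distinct, taken up to the affine automorphisms $z \mapsto az + b$ and up to rescaling of the differential. This is exactly the datum of an irreducible multiscale differential of type $(0^n, -2)$ with distinct markings, so there is a canonical isomorphism $H^\circ \cong \bcg_n^\circ$, compatible with $\beta \colon Y_n \to H$ and with the forgetful map $\tau \colon \bcg_n \to \overline{M}_{0,n+1}$ in the sense that $\mu_n$ and $\tau$ agree on this locus.

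To build $g$, I would use the De Concini--Procesi description of $Y_n$ as the closure of the image of $H^\circ$ in $H \times \prod_\sigma \mathbb{P}(V/W_\sigma)$, where $V = \mathbb{C}^n/\mathbb{C}$, the product runs over partitions $\sigma$ with $2 \le |B(\sigma)| \le n-1$, and $W_\sigma \subset V$ is the linear subspace cutting out the polydiagonal $\Delta_\sigma^h$ (\Cref{lem: Y is wonderful variety}). The point is that a multiscale differential supplies precisely these projective coordinates: the top-level differential on the root component, normalized to have a standard double pole at $x_{n+1}$, records the coarsest configuration of the markings and hence a point of $H$, while the lower-level differential on the subcurve where the block structure $\sigma$ first resolves records the relative positions of the colliding markings transverse to $\Delta^h_\sigma$, i.e. a point of $\mathbb{P}(V/W_\sigma)$. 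Granting that these assignments are regular on all of $\bcg_n$ and agree with the tautological map on $H^\circ$, their product factors through the closure $Y_n$, giving $g \colon \bcg_n \to Y_n$ with $\mu_n \circ g = \tau$.

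The main obstacle is verifying that this product map is everywhere regular and lands in $Y_n$ with the correct incidences --- equivalently, that the boundary stratification of $\bcg_n$ by stable rooted level trees matches, block by block, the wonderful stratification of $Y_n$ by nested sets of polydiagonals. Concretely, one must check that along a boundary stratum the BCGGM level-rescaling (plumbing) parameters agree up to units with the transverse coordinates of the successive blowups comprising $\mu_n$, so that a chain in the building set $\polyh \setminus \{\Delta^h_\top\}$ corresponds exactly to a level tree whose root carries the order $-2$ pole; the global residue condition imposes nothing here, since the dual tree is rooted with a single pole. This is a local computation in the plumbing coordinates of \cite{bcggm} matched against Li's iterated blowup charts \cite{li2009wonderfulcompactificationarrangementsubvarieties}.

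Finally I would conclude that $g$ is an isomorphism. Both $\bcg_n$ and $Y_n$ are smooth and proper of dimension $n-2$, and $g$ is birational. The boundary divisors of $Y_n$ are indexed by the building set, hence by partitions $\sigma$ with $2 \le |B(\sigma)| \le n-1$, while the codimension-one boundary strata of $\bcg_n$ are the two-level trees, which --- placing each block of size at least two on a distinct bottom component and leaving the remaining markings on the root --- are indexed by the very same partitions. By the modular description of $g$, it carries the divisor of $\bcg_n$ indexed by $\sigma$ onto that of $Y_n$ indexed by $\sigma$, so $g$ contracts no divisor. Since $g$ is a proper birational morphism from a smooth variety to a normal one, its exceptional locus is empty or pure of codimension one; as no divisor is contracted and $g$ is an isomorphism over $H^\circ$, the exceptional locus is empty and $g$ is an isomorphism, proving $\bcg_n \cong Y_n$.
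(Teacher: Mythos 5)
Your overall architecture is coherent: the identification $H^\circ \cong \bcg_n^\circ$, the modular reading of the De Concini--Procesi coordinates (correct in spirit, since $V/W_\sigma \cong \prod_{b \in B(\sigma)} \bC^b/\bC$ records exactly the within-block configurations up to per-block translation and one common scaling, which is what the bottom-level differentials and markings provide), the matching of boundary divisors with partitions in $L_n\setminus\{\bot,\top\}$, and the concluding purity/Zariski argument (valid because $Y_n$ is smooth, so any divisorial component of the exceptional locus of a proper birational morphism onto it would necessarily be contracted). However, there is a genuine gap, and it sits precisely where you write ``Granting that these assignments are regular on all of $\bcg_n$\dots'': that regularity is not a routine verification to be deferred, it is the entire mathematical content of the theorem. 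Each projection $\bcg_n \dashrightarrow \bP(V/W_\sigma)$ is a priori only a rational map --- the linear projection from $H$ is undefined along $\Delta^h_\sigma$, and resolving exactly these indeterminacies is what the wonderful model is for --- so one must prove it extends, including at deep strata where the markings of a single block of $\sigma$ are spread over components at several distinct levels, so that the would-be coordinate compares differentials rescaled by the level torus with different weights; your prescription ``the subcurve where the block structure $\sigma$ first resolves'' is not even precise enough to state the claim there. Moreover, the divisor-matching assertion in your final paragraph (``by the modular description of $g$, it carries the divisor indexed by $\sigma$ onto that of $Y_n$ indexed by $\sigma$'') presupposes this same unproven stratum-by-stratum control, so the purity step cannot rescue the argument as written.

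For comparison, the paper avoids the plumbing-coordinate computation altogether: by \cite{Devkotacohomology}*{\S 5}, the forgetful map $\tau:\bcg_n\to\overline{M}_{0,n+1}$ factors as an explicit sequence of blowups along proper transforms of the comb strata $\overline{M}_\sigma$, $\sigma \in \mathrm{Ex}_n$, in increasing order (\Cref{proposition: Prabhat's construction2}); on the other side, \Cref{lem: G-factors commute with blowup} combined with \cite{li2009wonderfulcompactificationarrangementsubvarieties}*{Thm. 1.3} shows that $Y_n \to H$ factors through Kapranov's $\overline{M}_{0,n+1}\to H$ followed by blowups of the very same centers in the same order (\Cref{proposition: wonderful variety from Mbar}). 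The isomorphism is then immediate, with no need to construct a map by hand or invoke purity. The regularity statement you grant is essentially equivalent to the blowup factorization of $\tau$, so to complete your route you would either have to reprove that result of \cite{Devkotacohomology} via the local computation you sketch, or cite it --- at which point the paper's comparison of the two blowup sequences over $\overline{M}_{0,n+1}$ is the shorter path.
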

This theorem will be proven by realizing both $Y_n$ and $\bcg_n$ as an explicit sequence of blowups of smooth subschemes of $\overline{M}_{0,n+1}$.

\subsection{The wonderful variety as a blowup of \texorpdfstring{$\overline{M}_{0,n+1}$}{M,0,n+1}}
\label{SS:wonderfulvarietyblowup}
Recall that for any subset $S \subseteq [n]$ such that $2 \leq |S| \leq n-1$, there is a boundary divisor $\delta_S \subset \overline{M}_{0,n+1}$, whose general point is a curve with two $\bP^1$ components intersecting at a point, such that the marked points indexed by $S$ are on one component, and those indexed by $[n+1] \setminus S$ are on the other. Note that $\delta_S$ denotes the geometric boundary divisor not just its class.\footnote{Note also that $\delta_S$ would be denoted by $D^S$ in \cite{keel} if we regard $S$ as a subset of $[n+1]$.} For any partition $\sigma \in L_n \setminus \{ \bot \}$, we have a corresponding closed stratum of $\overline{M}_{0,n+1}$, given by
    \begin{equation}
        \label{equation: comb strata} 
        \overline{M}_\sigma := \bigcap_{S \in B(\sigma), \:|S| \geq 2} \delta_S \subseteq \overline{M}_{0,n+1}, 
    \end{equation}
with the natural convention that $\overline{M}_\top = \overline{M}_{0,n+1}$. A general point of $\overline{M}_\sigma$ is a \emph{comb curve}, with the marked points indexed by $n+1$ and the singleton blocks of $\sigma$ on the backbone, and teeth corresponding to the blocks of $\sigma$ with at least $2$ elements, containing the respective marked points -- see Figure \ref{Fig:combcurve}. Not all boundary strata of $\overline{M}_{0,n+1}$ are of this form, but we will not need to reference the others in what follows.
\begin{figure}[h]
\label{Fig:combcurve}
		\begin{center}
			\begin{tikzpicture}[scale = 1.0]
                \draw [thick] (-2.5,0) -- (2.5,0);
                \draw [thick] (-1.5,.75) -- (-1.5,-2.25);
                \draw [thick] (0,.75) -- (0,-2.25);
                \draw [thick] (1.5,.75) -- (1.5,-2.25);

                \fill[black] (-1.5,-2) circle (3pt);
                \fill[black] (-1.5,-1.25) circle (3pt);
                \fill[black] (-1.5,-.5) circle (3pt);

                \fill[black] (0,-1.25) circle (3pt);
                \fill[black] (0,-2) circle (3pt);

                \fill[black] (1.5,-1.625) circle (3pt);
                \fill[black] (1.5,-.8) circle (3pt);

                \fill[black] (.75,0) circle (3pt);
                \fill[black] (-1.2,0) circle (3pt);
                \fill[black] (-.3,0) circle (3pt);

                \draw node at (-1.9,-2) {$p_1$};
                \draw node at (-1.9,-1.25) {$p_2$};
                \draw node at (-1.9,-.5) {$p_4$};

                \draw node at (-.4,-1.25) {$p_5$};
                \draw node at (-.4,-2) {$p_8$};

                \draw node at (1.9,-1.625) {$p_3$};
                \draw node at (1.9,-.8) {$p_6$};

                \draw node at (.75,.4) {$p_7$};
                \draw node at (-1.2,.4) {$p_9$};
                \draw node at (-.3,.4) {$p_{10}$};
			\end{tikzpicture}
			\caption{A comb curve in $\overline{M}_{0,10}$ corresponding to the partition $\pi = 124|58|36|7|9$ in $L_9$.}
		\end{center}
	\end{figure}
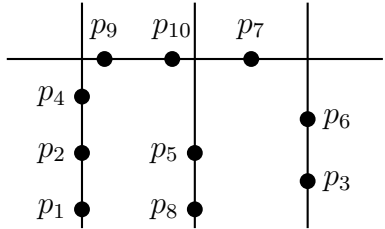

We say that an enumeration of a poset is \emph{increasing} if larger elements come after smaller ones. We partially order the set $L_n$ of partitions of $[n]$ by refinement, that is, $\sigma \leq \sigma'$ if each block of $\sigma'$ is contained in a block of $\sigma$. Let $\sigma_1,\ldots,\sigma_m$ be an increasing enumeration of the partitions of $[n]$ which have at least two blocks with at least two elements.
	
\begin{prop}
\label{proposition: wonderful variety from Mbar}
    There exists a sequence of blowups
    \[ 
        Y_n = X_m \to X_{m-1} \to \cdots \to X_1 \to X_0 = \overline{M}_{0,n+1}, 
    \]
    such that $X_i \to X_{i-1}$ is the blowup of $X_{i-1}$ at the proper transform of $\overline{M}_{\sigma_i}$ by the sequence of blowups $X_{i-1} \to \cdots \to X_0 = \overline{M}_{0,n+1}$, for $i=1,\ldots,m$.
\end{prop}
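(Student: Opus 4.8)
The plan is to recognize this tower of blowups as a factorization of the birational morphism $\mu_n \colon Y_n \to \overline{M}_{0,n+1}$ constructed in \Cref{Section 2}, and then to match its blowup centers with the comb strata $\overline{M}_{\sigma_i}$. Recall that $\overline{M}_{0,n+1}$ is the wonderful model of the (minimal) building set $\diagh$ and that $Y_n$ is the wonderful model of the (maximal) building set $\polyh \setminus \{\Delta^h_\top\}$, both for the polydiagonal arrangement in $H$. Since $\diagh \subseteq \polyh \setminus \{\Delta^h_\top\}$, the morphism $\mu_n$ is, by \cite{li2009wonderfulcompactificationarrangementsubvarieties}*{Thm. 1.3} (exactly as in the construction of $\mu_n$ following \eqref{diagram: big diagram}), the composite of the blowups along the dominant transforms of the elements of $(\polyh \setminus \{\Delta^h_\top\}) \setminus \diagh$, taken in order of increasing dimension. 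The first step is to observe that this set of ``extra'' building-set elements is precisely $\{\Delta^h_{\sigma_1}, \ldots, \Delta^h_{\sigma_m}\}$: a polydiagonal $\Delta^h_\sigma$ lies in $\diagh$ exactly when $\sigma$ has a single block of size $\geq 2$, so the complement consists of those $\sigma \neq \bot, \top$ with at least two blocks of size $\geq 2$, which is the list in the statement.

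The heart of the argument is the identification, on $X_0 = \overline{M}_{0,n+1}$, of the dominant transform of $\Delta^h_\sigma$ with the stratum $\overline{M}_\sigma = \bigcap_{B \in B(\sigma),\, |B| \geq 2} \delta_B$. I would use two inputs. First, under the wonderful-model structure of $\overline{M}_{0,n+1}$ (cf. \cite{KapranovChow}*{Thm. 4.3.3}), the boundary divisor $\delta_B$ is exactly the divisor associated with the single diagonal $\Delta^h_B$, namely its strict transform when $|B| = 2$ and its exceptional divisor when $|B| \geq 3$. Second, writing $B_1, \ldots, B_b$ for the nontrivial blocks of $\sigma$, the fact that they are pairwise disjoint makes $\{\Delta^h_{B_j}\}_j$ a $\diagh$-nested set and the $\diagh$-factorization of $\Delta^h_\sigma = \bigcap_j \Delta^h_{B_j}$. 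Using this together with the structure theory of wonderful models \cite{li2009wonderfulcompactificationarrangementsubvarieties}*{Thm. 1.1}, I would show that the divisors $\delta_{B_j}$ meet transversally and that the dominant transform of $\Delta^h_\sigma$ is their intersection $\bigcap_j \delta_{B_j} = \overline{M}_\sigma$. A dimension count is reassuring here: $\dim \overline{M}_\sigma = \dim \Delta^h_\sigma + \sum_j(|B_j| - 2)$, the excess over $\dim \Delta^h_\sigma$ being exactly the moduli of the teeth, which is precisely the thickening produced by passing to the dominant transform.

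Granting this identification, the two towers match stage by stage. I would argue inductively that the stratum $\overline{M}_{\sigma_i} \subseteq X_0$, being the dominant transform of $\Delta^h_{\sigma_i}$, has proper transform along $X_{i-1} \to X_0$ equal to the dominant transform of $\Delta^h_{\sigma_i}$ along $X_{i-1} \to H$, which is the $i$-th center in the construction of $\mu_n$. For this to be a legitimate (nonempty, smooth) center one needs $\overline{M}_{\sigma_i}$ not to be contained in any earlier center $\overline{M}_{\sigma_j}$, $j < i$; this holds because $\overline{M}_{\sigma_i} \subseteq \overline{M}_{\sigma_j}$ forces every nontrivial block of $\sigma_j$ to be a block of $\sigma_i$, hence $\sigma_i \leq \sigma_j$ in the refinement order, which is incompatible with $j < i$ for an increasing enumeration. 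Blowing up the proper transforms of $\overline{M}_{\sigma_1}, \ldots, \overline{M}_{\sigma_m}$ in turn therefore reproduces $\mu_n$, giving $X_m \cong Y_n$.

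I expect the main obstacle to be the key identification of the dominant transform of $\Delta^h_\sigma$ with the stratum $\overline{M}_\sigma$. This is the one genuinely geometric step: the thickening of $\Delta^h_\sigma$ caused by the blowups of the various diagonals containing it must be shown to be recorded exactly by the intersection of the boundary divisors $\bigcap_j \delta_{B_j}$, and it is here that the nestedness of the blocks and the transversality statements of \cite{li2009wonderfulcompactificationarrangementsubvarieties} do the real work. A secondary, more bookkeeping-heavy point is to confirm that the increasing refinement order is a valid blowup order in the sense of condition $(*)$ of \cite{li2009wonderfulcompactificationarrangementsubvarieties}*{Thm. 1.3}, reconciling Li's dominant transforms with the proper transforms named in the statement.
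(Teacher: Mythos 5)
Your proposal is correct and takes essentially the same route as the paper: enumerate the building set $\polyh\setminus\{\Delta^h_\top\}$ with $\diagh$ first and the $\Delta^h_{\sigma_i}$ afterwards in increasing order, invoke \cite{li2009wonderfulcompactificationarrangementsubvarieties}*{Thm. 1.3} to factor $Y_n\to H$ through Kapranov's $\overline{M}_{0,n+1}\to H$, and identify the dominant transform of $\Delta^h_{\sigma}$ on $\overline{M}_{0,n+1}$ with the comb stratum $\overline{M}_\sigma$ via its $\diagh$-factors $\{\Delta^h_{B_j}\}$. The step you flag as the main obstacle is precisely what the paper isolates and proves as \Cref{lem: G-factors commute with blowup} (the dominant transform of an arrangement element on a wonderful model equals the intersection of the boundary divisors of its building-set factors), established by induction along the blowup sequence using \cite{li2009wonderfulcompactificationarrangementsubvarieties}*{Prop. 2.8} and the transversality you invoke.
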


Before giving the proof of the proposition, we state a general lemma concerning wonderful compactifications, which is implicit in the arguments in \cite{li2009wonderfulcompactificationarrangementsubvarieties}.

\begin{lem}
\label{lem: G-factors commute with blowup}
    Let $Y$ be a smooth projective variety. Let ${\arr G}$ be a building set in $Y$ with induced arrangement ${\arr S}$, $S \in {\arr S}$, and $\mathrm{F}(S) \subset {\arr G}$ the set of ${\arr G}$-factors of $S$ \cite{li2009wonderfulcompactificationarrangementsubvarieties}*{Def. 2.2}. Consider the construction of the wonderful compactification $Y_{\arr G}$ of ${\arr G}$ in \cite{li2009wonderfulcompactificationarrangementsubvarieties}*{\S2} as a sequence of blowups
    \begin{equation}
    \label{equation: Li's increasing blowup construction} 
    Y_{{\arr G}} = Y_N \to Y_{N-1} \to \cdots \to Y_1 \to Y_0 = Y, \end{equation}
    where at each step we blow up a minimal element of the respective building set. 
		
    Then, the dominant transform of $S$ on $Y_{\arr G}$, obtained by successively taking dominant transforms of $S$ in \eqref{equation: Li's increasing blowup construction}, is equal to $\bigcap_{G \in \mathrm{F}(S)} D_G$, where $D_G$ is the boundary divisor of $Y_{\arr G}$ corresponding to $G \in {\arr G}$, cf. \cite{li2009wonderfulcompactificationarrangementsubvarieties}*{Thm. 1.2}.
    \end{lem}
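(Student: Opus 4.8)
The plan is to induct on the number $N$ of genuine (non-divisorial) blowups in Li's construction \eqref{equation: Li's increasing blowup construction}. Let $G_1$ be the minimal element of $\arr G$ blown up first, with $\pi_1\colon Y_1=\mathrm{Bl}_{G_1}Y\to Y$ and exceptional divisor $E_1$. By the inductive description of the construction in \cite{li2009wonderfulcompactificationarrangementsubvarieties}*{\S2}, the collection $\arr G'=\{E_1\}\cup\{\tilde G: G\in\arr G\setminus\{G_1\}\}$, where $\tilde G$ denotes dominant transform, is again a building set on $Y_1$, its induced arrangement is the set of dominant transforms of $\arr S$, and $Y_{\arr G}=(Y_1)_{\arr G'}$; moreover $E_1=D_{G_1}$ and $D_{\tilde G}=D_G$ for $G\neq G_1$. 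Since $G_1$ is minimal, no other element of $\arr G$ is contained in it, so every $\tilde G$ with $G\neq G_1$ is an honest strict transform and only $G_1$ becomes divisorial; thus exactly one genuine blowup is removed and the inductive hypothesis applies to $(Y_1,\arr G')$. Because dominant transforms compose, the dominant transform of $S$ on $Y_{\arr G}$ agrees with that of $\tilde S$ on $(Y_1)_{\arr G'}$, which by induction equals $\bigcap_{H\in\mathrm{F}'(\tilde S)}D_H$, where $\mathrm{F}'$ denotes $\arr G'$-factors. It therefore suffices to prove the single-blowup statement $\mathrm{F}'(\tilde S)=\{\tilde G: G\in\mathrm{F}(S)\}$, for then $\bigcap_{H\in\mathrm{F}'(\tilde S)}D_H=\bigcap_{G\in\mathrm{F}(S)}D_G$ as claimed.

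This single-blowup statement is the heart of the argument and the step I expect to cause the most trouble; I would establish it from the local analysis of clean intersections underlying \cite{li2009wonderfulcompactificationarrangementsubvarieties}*{\S2}, splitting into two cases according to whether $S\subseteq G_1$. Write $S=\bigcap_{G\in\mathrm{F}(S)}G$ as a clean transversal intersection. If $S\not\subseteq G_1$, then no factor $G$ of $S$ is contained in $G_1$ (each contains $S$), so every $\tilde G$ is a strict transform, the strict transforms $\{\tilde G\}$ again meet transversally with intersection the strict transform $\tilde S$, and $\tilde S\not\subseteq E_1$, so $E_1$ is not among the factors; hence $\mathrm{F}'(\tilde S)=\{\tilde G: G\in\mathrm{F}(S)\}$. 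If instead $S\subseteq G_1$, then minimality of $G_1$ forces $G_1\in\mathrm{F}(S)$, so $\tilde{G_1}=E_1$; the remaining factors are incomparable to $G_1$ and thus have strict transforms, and a coordinate computation (choosing coordinates in which $G_1$ and the other factors are coordinate subspaces meeting transversally) shows $\tilde S=\pi_1^{-1}(S)=E_1\cap\bigcap_{G\neq G_1}\tilde G$ transversally. In both cases the exhibited elements of $\arr G'$ contain $\tilde S$, meet transversally with total codimension equal to that of $\tilde S$, and are minimal with this property, which by the building-set axioms identifies them as precisely the $\arr G'$-factors $\mathrm{F}'(\tilde S)$.

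Finally I would assemble the pieces and check the base case. When there are no genuine blowups, $\arr G$ consists of divisors, every $S\in\arr S$ has $\mathrm{F}(S)$ its set of divisorial factors, and the dominant transform of $S$ is already $\bigcap_{G\in\mathrm{F}(S)}G=\bigcap_{G\in\mathrm{F}(S)}D_G$. The inductive step then combines the transitivity of dominant transforms, the inductive hypothesis on $(Y_1,\arr G')$, the single-blowup identity $\mathrm{F}'(\tilde S)=\{\tilde G:G\in\mathrm{F}(S)\}$, and the divisor identifications $D_{\tilde G}=D_G$, $D_{E_1}=D_{G_1}$, to yield $\bigcap_{G\in\mathrm{F}(S)}D_G$. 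The only genuinely delicate bookkeeping is tracking which building-set elements contain $S$ before and after the blowup---in particular verifying that the exceptional divisor $E_1$ enters $\mathrm{F}'(\tilde S)$ exactly when $S\subseteq G_1$ and never as a spurious extra factor---which is precisely what the transversality of the clean intersection $S=\bigcap_{G\in\mathrm{F}(S)}G$ controls.
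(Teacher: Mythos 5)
Your overall strategy coincides with the paper's: use Li's Prop.~2.8 to pass the building set across one blowup, reduce by induction to the single-blowup claim $\mathrm{F}'(\tilde S)=\{\tilde G: G\in\mathrm{F}(S)\}$, and prove that claim by cases according to whether $S\subseteq G_1$. The case $S\not\subseteq G_1$ is fine essentially as you sketch it, since there all relevant dominant transforms are strict transforms and strict transforms preserve inclusions in both directions. But in the case $S\subseteq G_1$ your minimality verification has a genuine gap. Showing that $E_1$ and the $\tilde G$, $G\in\mathrm{F}(S)\setminus\{G_1\}$, contain $\tilde S$ and meet transversally with intersection $\tilde S$ does \emph{not} identify them as the $\arr G'$-factors: factors are by definition the \emph{minimal} elements of $\arr G'$ containing $\tilde S$, and a transversal collection cutting out $S$ need not consist of minimal elements. (In $\bC^3$ with the maximal building set of polydiagonals, the small diagonal is the transversal intersection of $\{x=y\}$ and $\{y=z\}$, yet its unique factor is the small diagonal itself.) So transversality does not ``control'' this, contrary to your closing paragraph; you must separately show that every element of $\arr G'$ containing $\tilde S$ contains one of your exhibited elements.

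The sub-case your argument cannot reach is $G'\in\arr G$ with $G'\supsetneq G_1$. Such a $G'$ contains $S$, and the only factor of $S$ it is guaranteed to contain is $G_1$ itself; but $\tilde G_1=E_1\not\subseteq\tilde G'$, because the strict transform $\tilde G'$ meets $E_1$ only in the proper projective subbundle $\bP(N_{G_1}G')\subsetneq\bP(N_{G_1}Y)=E_1$. Hence one must rule out $\tilde G'\supseteq\tilde S$ directly, and this requires the key argument that is absent from your proposal and is exactly how the paper handles its Case~1 sub-case $G_k\subsetneq G$: since $S\subseteq G_1$, the dominant transform $\tilde S=\pi_1^{-1}(S)$ contains the \emph{full} exceptional fiber $\pi_1^{-1}(s)$ over every $s\in S$, whereas $\tilde G'\cap E_1=\bP(N_{G_1}G')$ has strictly smaller fiber dimension and therefore contains no full fiber; so $\tilde G'\not\supseteq\tilde S$. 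With this exceptional-fiber argument inserted, your induction closes and the proof matches the paper's.
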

	
	\begin{proof}
		The subvariety $F \in {\arr G}$ such that $Y_1= \mathrm{Bl}_F Y$ is minimal in ${\arr G}$ by assumption. By \cite{li2009wonderfulcompactificationarrangementsubvarieties}*{Prop. 2.8}, ${\arr G}$ gives rise to a building set $\widetilde{\arr G}$ on $Y_1$ consisting of the dominant transforms of the elements of ${\arr G}$. By \emph{loc. cit.}, the arrangement $\widetilde{\arr S}$ induced by $\widetilde{\arr G}$ contains $\widetilde{S}$.
		
		We claim that the $\widetilde{\arr G}$-factors of $\widetilde{S}$ on $Y_1$ are the dominant transforms on $Y_1$ of the ${\arr G}$-factors of $S$ on $Y$. Indeed, this is the special case of the statement in the second line of \cite{li2009wonderfulcompactificationarrangementsubvarieties}*{p. 560} when $k=1$ in the situation of \emph{loc. cit.}, since the set of ${\arr G}$-factors of $S$ is the ${\arr G}$-nest \cite{li2009wonderfulcompactificationarrangementsubvarieties}*{Def. 2.3} induced by the singleton flag $\{S\}$ and, similarly, the set of $\widetilde{\arr G}$-factors of $\widetilde{S}$ is the $\widetilde{\arr G}$-nest induced by the singleton flag $\{\widetilde{S}\}$. However, for the reader's convenience, we provide a more self-contained proof below. Let $\mathrm{F}(S) = \{ G_1,\ldots,G_m \}$. 
		
	   \emph{Case 1.} $S \subseteq F$. Since $F \in {\arr G}$ and $G_1,\ldots,G_m$ are by definition the minimal elements of ${\arr G}$ containing $S$, there exists $k \in [m]$ such that $G_k \subseteq F$. However, $F$ is assumed minimal in ${\arr G}$, so $F = G_k$, that is, $F$ is a ${\arr G}$-factor of $S$. We claim that $\widetilde{S} \subseteq \widetilde{G}_i$ for all $i$. Indeed, if $i=k$, this amounts to the statement that $\widetilde{S}$ is contained in the exceptional divisor, while if $i \neq k$, the transversality of $G_i$ with the center of blowup $G_k$\footnote{This follows from the definition of \emph{building set} in \cite{li2009wonderfulcompactificationarrangementsubvarieties}*{Def. 2.2}. Indeed, one of the stipulations of \emph{loc. cit.} is that the minimal elements of $\cG$ containing $S$, i.e. elements of $\rm{F}(S)$, intersect transversely.}  implies that $\widetilde{G}_i$ coincides with the preimage of $G_i \subset Y$ by $Y_1 \to Y$, but $\widetilde{S}$ is the preimage of $S$ as it is contained in the center of blowup, so $\widetilde{S} \subseteq \widetilde{G}_i$. It remains to check that $\widetilde{G}_1,\ldots,\widetilde{G}_m$ are the minimal elements of $\widetilde{\arr G}$ containing $\widetilde{S}$. Let $G \in {\arr G}$ such that $\widetilde{S} \subseteq \widetilde{G}$. Mapping to $Y$, we obtain $S \subseteq G$, so there exists $i \in [m]$ such that $G_i \subseteq G$. If $i \neq k$, then $\widetilde{G}_i \subseteq \widetilde{G}$ since these dominant transforms are proper transforms. If $i=k$, let $s \in S$ and $E_s$ the fiber of the exceptional divisor $E = \widetilde{G}_k$ over $s$. Then, $E_s \subseteq \widetilde{S} \subseteq \widetilde{G}$. If $G_k \subsetneq G$, then $G$ contains the center of blowup, and its proper transform $\widetilde{G}$ contains a fiber of the exceptional divisor over the center, which is impossible, hence $G = G_k$ and $\widetilde{G} = \widetilde{G}_k$.
		
		\emph{Case 2.} $S \not\subseteq F$. In particular, $G_i \not\subseteq F$ for $i=1,\ldots,m$. Then, the dominant transforms of $S$ and $G_i$ are proper transforms, so $\widetilde{S} \subseteq \widetilde{G}_i$ for all $i$. Let $G \in {\arr G}$ such that $\widetilde{S} \subseteq \widetilde{G}$. Mapping to $Y$, we obtain $S \subseteq G$, so there exists $i$ such that $G_i \subseteq G$. Once again, the dominant transforms of $G_i$ and $G$ are proper transforms, so $\widetilde{G}_i \subseteq \widetilde{G}$, showing that $\widetilde{G}_1,\ldots, \widetilde{G}_m$ are the minimal elements of $\widetilde{\arr G}$ which contain $\widetilde{G}$, thus completing the proof of the claim.
		
		The claim in the second paragraph of this proof and \cite{li2009wonderfulcompactificationarrangementsubvarieties}*{Prop. 2.8} allow us to continue the argument inductively, and conclude that the dominant transform of $S$ on $Y_{\arr G}$ is the intersection of the dominant transforms on $Y_{{\arr G}}$ of the ${\arr G}$-factors of $S$, which are precisely the divisors $D_G$, over all $G \in {\mathrm F}(S)$.
	\end{proof}
    
\begin{proof}[Proof of \Cref{proposition: wonderful variety from Mbar}] 
    Consider an enumeration of $\polyh \setminus \{\Delta^h_\top\}$ as follows: first, an increasing enumeration of $\diagh$, then the enumeration $\Delta_{\sigma_1}^h,\ldots,\Delta_{\sigma_m}^h$ of $\polyh \setminus (\diagh \cup \{\Delta^h_\top\})$. Note that this enumeration of $\polyh \setminus \{\Delta^h_\top\}$ satisfies condition $(*)$ in \cite{li2009wonderfulcompactificationarrangementsubvarieties}*{Thm. 1.3}. Therefore, the final result of the sequence of blowups of (dominant transforms of) polydiagonals in this order is $Y_n$. However, by the discussion of \Cref{Section 2}, we obtain a factorization
    \[ 
        Y_n \xrightarrow{\mu_n} \overline{M}_{0,n+1} \xrightarrow{\kappa} H 
    \]
    of $Y_n \to H$ through Kapranov's construction $\kappa: \overline{M}_{0,n+1} \to H \cong \bP^{n-2}$ of $\overline{M}_{0,n+1}$, as previously observed in \eqref{diagram: big diagram}. The boundary divisor $\delta_S \subset \overline{M}_{0,n+1}$ is the divisor corresponding to the diagonal $\Delta^h_S \subset H$ in the sense of \cite{li2009wonderfulcompactificationarrangementsubvarieties}*{Thm 1.2}. By \Cref{lem: G-factors commute with blowup}, for any $\sigma \in \{\sigma_1,\ldots,\sigma_m\}$, the dominant transform of the polydiagonal $\Delta_\sigma^h$ on $\overline{M}_{0,n+1}$, regarded as the wonderful model of $\diagh$, is $\overline{M}_\sigma$. Then, \cite{li2009wonderfulcompactificationarrangementsubvarieties}*{Thm. 1.3}, completes the proof. (Although we obtain the statement of \Cref{proposition: wonderful variety from Mbar} for dominant transforms of the loci $\overline{M}_{\sigma_i}$, these transforms are in fact proper transforms, since the enumeration $\sigma_1,\ldots,\sigma_m$ is increasing.)
    \end{proof}
	
    On the other hand, precisely the same construction produces $\bcg_n$ by \cite{Devkotacohomology}, as we will show in the next subsection.

\subsection{Moduli of multiscale differentials as a blowup of \texorpdfstring{$\overline{M}_{0,n+1}$}{M0,n+1}}

Before we state the precise analog of \Cref{proposition: wonderful variety from Mbar} for $\bcg_n$, let us review the structure of its boundary divisor as described in \cite{bcggm} and \cite{Devkotacohomology}.

The boundary of $\bcg_n$ admits a stratification with strata indexed by stable rooted level trees $\Lambda$, where the $(-2)$-order marked point lies on the top level, which consists of a single root vertex. Given such a level tree $\Lambda$, let us denote by $D_\Lambda$ the corresponding stratum. The number of levels below the root (equivalently the number of level passages) equals the codimension of $D_\Lambda$ in $\bcg_n$. In particular, the irreducible components of the boundary divisor of $\bcg_n$ are of the form $D_\Gamma$ for $\Gamma$ a rooted level tree with two levels; if, in addition, the lower level of $\Gamma$ has two or more vertices then $D_\Gamma$ is exceptional over $\overline{M}_{0,n+1}$.

The boundary of $\bcg_n$ regarded as compactification of $M_{0,n+1}$ is a simple normal crossings divisor; as such, the intersection $D_{\Gamma_1}\cap D_{\Gamma_2}$ of two boundary divisors is the irreducible stratum corresponding to a stable rooted level tree $\Lambda$ with three levels (i.e. two level passages) determined by $\Gamma_1$ and $\Gamma_2$. Conversely, $\Gamma_i$ is obtained from $\Lambda$ by contracting one of the two level passages for $i=1,2$. With this, we introduce a partial ordering on the boundary divisors of $\bcg_n$, exceptional over $\overline{M}_{0,n+1}$. We say $D_{\Gamma_1}< D_{\Gamma_2}$ if $D_{\Gamma_1}\cap D_{\Gamma_2}\eqqcolon D_\Lambda$ is non-empty and $\Gamma_1$ is obtained from $\Lambda$ by contracting the bottom level passage, and $\Gamma_2$ by contracting the top level passage. 

Equivalently, $D_{\Gamma_1}< D_{\Gamma_2}$ if $D_{\Gamma_1}\cap D_{\Gamma_2}$ is non-empty and the generalized stratum given by the bottom level of $D_{\Gamma_1}$ has higher dimension than that given by the bottom level of $D_{\Gamma_2}$.\footnote{If $\Gamma$ is a level graph with two levels and $k$ vertices in the bottom level then the dimension of the generalized stratum corresponding to its lower level is equal to the sum of the dimensions of the moduli spaces $M_{0,i_j}$ corresponding to each vertex in the lower level, plus $k-1$ coming from the differentials on each of the $k$ vertices up to $\bC^*$-action -- see \cite{cmz22}*{\S4} for the precise definition of generalized stratum.} Then for any total order, also denoted ``$<$'', refining this partial order, $\tau:\bcg_n\rightarrow\overline{M}_{0,n+1}$ factorizes into a sequence of blowups along smooth centers $X_m=\bcg_n\rightarrow X_{m-1}\rightarrow\cdots\rightarrow X_0=\overline{M}_{0,n+1}$ by blowing down the exceptional divisors in descending order -- see \cite{Devkotacohomology}*{\S 5}. Because $\tau(D_{\Gamma_1})\cap \tau(D_{\Gamma_2})=\emptyset$ when $D_{\Gamma_1}\cap D_{\Gamma_2}=\emptyset$ for two divisors $D_{\Gamma_1}$ and $D_{\Gamma_2}$, the choice of refinement of the partial order is irrelevant. This sequence of blowups will furnish us the analog of \Cref{proposition: wonderful variety from Mbar} for $\bcg_n$.

The proof of \Cref{proposition: Prabhat's construction2} below boils down mostly to some simple combinatorics. In the context of $\bcg_n$, a stable rooted level tree $\Gamma$ is equipped with $n+1$ half-edges which encode the irreducible components on which the corresponding marked points lie. As mentioned above, the $(-2)$-order marked point lies on the root, but the other marked points $p_1,\ldots, p_n$ can lie on any component. The stability condition enforces that each irreducible component has at least three special points -- that is, marked points or nodes. Given a two-level stable rooted level tree $\Gamma$, we define a partition $\sigma(\Gamma) \in L_n$ given by $i\sim_{\sigma(\Gamma)} j$ if and only if $p_i$ and $p_j$ lie on the same non-root component of $\Gamma$. This defines a map
\[
    \sigma:\left\{\parbox{3.4cm}{\centering two-level stable marked level trees}\right\} \longrightarrow L_n \setminus \{\top,\bot\}.
\]
The non-root vertices of $\Gamma$ correspond bijectively to the non-singleton blocks of $\sigma(\Gamma)$ and the points attached to the root of $\Gamma$ correspond to the singleton blocks of $\sigma(\Gamma)$. Consequently, $\sigma$ is invertible. See Figure \ref{Fig:combcurve} for a picture.\footnote{Note that in that example $n = 9$, because we regard the ($-2$)-order marked point as being $p_{10}$ and thus not contributing to the data of the partition.} Note that $\sigma$ maps the set of dual trees of exceptional divisors for $\bcg_n \to \overline{M}_{0,n+1}$ to the set of partitions having at least two non-singleton blocks, which we denote by $\rm{Ex}_n$.

\begin{prop}
    \label{proposition: Prabhat's construction2}
	There exists a sequence of blowups
		\[ 
            \bcg_n = X_m \to X_{m-1} \to \cdots \to X_1 \to X_0 = \overline{M}_{0,n+1} 
        \]
	such that $X_i \to X_{i-1}$ is the blowup of $X_{i-1}$ at the proper transform of $\overline{M}_{\sigma_i}$ by the sequence of blowups $X_{i-1} \to \cdots \to X_0 = \overline{M}_{0,n+1}$, for $i=1,\ldots,m$.
\end{prop}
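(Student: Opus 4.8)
The plan is to obtain the desired sequence of blowups directly from the factorization of the forgetful map $\tau\colon\bcg_n\to\overline{M}_{0,n+1}$ into blowups along smooth centers recorded in \cite{Devkotacohomology}, and then to match the combinatorial data of that factorization with the comb strata $\overline{M}_{\sigma_i}$, exactly as in \Cref{proposition: wonderful variety from Mbar}. Recall that \cite{Devkotacohomology} produces, for \emph{any} total order refining the partial order ``$<$'' on the boundary divisors of $\bcg_n$ exceptional over $\overline{M}_{0,n+1}$, a tower $\bcg_n=X_m\to\cdots\to X_0=\overline{M}_{0,n+1}$ in which the divisors $D_\Gamma$ are blown down in descending order; equivalently, their centers are blown up in ascending order. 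Since $\sigma$ restricts to a bijection between these exceptional divisors and $\mathrm{Ex}_n=\{\sigma_1,\dots,\sigma_m\}$, there are exactly $m$ blowups, and it remains to verify two things: first, that the increasing enumeration $\sigma_1,\dots,\sigma_m$ is a legitimate total order refining ``$<$''; and second, that the center of the $i$-th blowup is the proper transform of $\overline{M}_{\sigma_i}$ by $X_{i-1}\to\cdots\to X_0$.

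For the first point, I would show that $D_{\Gamma_1}<D_{\Gamma_2}$ implies $\sigma(\Gamma_1)\le\sigma(\Gamma_2)$ in the refinement order. Writing $D_{\Gamma_1}\cap D_{\Gamma_2}=D_\Lambda$ with $\Gamma_1$ (resp. $\Gamma_2$) obtained from the three-level tree $\Lambda$ by contracting the bottom (resp. top) level passage, one checks directly from the definition of $\sigma$ that the blocks of $\sigma(\Gamma_2)$ refine those of $\sigma(\Gamma_1)$: contracting the bottom passage amalgamates the lowest two levels of $\Lambda$, so it can only coarsen the partition recorded by the bottom vertices, whereas contracting the top passage leaves the bottom vertices of $\Lambda$ intact and records exactly that finer partition. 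Hence every block of $\sigma(\Gamma_2)$ is contained in a block of $\sigma(\Gamma_1)$, i.e. $\sigma(\Gamma_1)\le\sigma(\Gamma_2)$. As $\sigma$ is injective, any relation $D_{\Gamma_a}<D_{\Gamma_b}$ forces $\sigma_a<\sigma_b$; thus $\sigma$ is order-preserving, its inverse carries the increasing enumeration to a linear extension of ``$<$'', and \cite{Devkotacohomology} applies with this order.

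For the second point, the key identity is $\tau(D_\Gamma)=\overline{M}_{\sigma(\Gamma)}$. Indeed, the generic point of the divisor $D_\Gamma$ is a two-level curve whose underlying pointed stable curve is a comb whose backbone carries $p_{n+1}$ together with the points indexed by the singleton blocks of $\sigma(\Gamma)$, and with one tooth per non-singleton block; forgetting the differential sends this to the generic point of $\bigcap_{S\in B(\sigma(\Gamma)),\,|S|\ge 2}\delta_S=\overline{M}_{\sigma(\Gamma)}$, cf.\ \eqref{equation: comb strata}, and passing to closures gives the identity. Consequently the center $Z_i$ of the $i$-th blowup, being the smooth irreducible locus of $X_{i-1}$ contracted by $D_{\sigma^{-1}(\sigma_i)}$, maps onto $\overline{M}_{\sigma_i}$. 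To see $Z_i$ is the proper transform, I would use that $\overline{M}_{\sigma_i}\subseteq\overline{M}_{\sigma_j}$ holds precisely when every non-singleton block of $\sigma_j$ is a non-singleton block of $\sigma_i$, which in turn forces $\sigma_i\le\sigma_j$; hence the generic point of $\overline{M}_{\sigma_i}$ lies in no earlier center $\overline{M}_{\sigma_j}$ with $j<i$, the morphism $X_{i-1}\to\overline{M}_{0,n+1}$ is an isomorphism near it, and $Z_i\to\overline{M}_{\sigma_i}$ is an isomorphism there. As $Z_i$ is the closure of this locus, it coincides with the proper transform of $\overline{M}_{\sigma_i}$.

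The step I expect to be most delicate is the order comparison in the second paragraph, since ``$<$'' is defined only through three-level intersections and one must track carefully how \emph{long edges} in $\Lambda$ (components separated by more than one level) contribute to the two contractions; getting this bookkeeping right is what guarantees that the coarser partition always corresponds to the smaller divisor, and hence to the earlier blowup. Once this is in place, together with the containment criterion for comb strata above, the remaining verifications are routine manipulations of level trees and set partitions, consistent with the claim that the proof is essentially combinatorial.
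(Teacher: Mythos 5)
Your proposal is correct and takes essentially the same route as the paper's proof: both rely on the factorization of $\tau:\bcg_n\to\overline{M}_{0,n+1}$ from \cite{Devkotacohomology}*{\S 5}, the identity $\tau(D_\Gamma)=\overline{M}_{\sigma(\Gamma)}$, and the fact that the partial order on $\tau$-exceptional divisors corresponds under $\sigma$ to the refinement order on $\mathrm{Ex}_n$, so that the increasing enumeration $\sigma_1,\ldots,\sigma_m$ pulls back to an admissible total order. Your additional verifications (the three-level-tree comparison, including long edges, and the identification of the centers with proper rather than merely dominant transforms) are correct elaborations of points the paper treats more tersely.
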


\begin{proof}
    We need to verify that in the sequence of blowups factoring $\tau:\bcg_n\rightarrow \overline{M}_{0,n+1}$ constructed in \cite{Devkotacohomology}*{\S 5}, each step blows up the dominant transform of $\overline{M}_{\sigma}$ for some $\sigma \in \rm{Ex}_n$ in order of an increasing enumeration of $\rm{Ex}_n\subset L_n$. For any $\tau$-exceptional divisor $D_\Gamma$ one has 
        \[
            \tau(D_\Gamma) \:\:= \bigcap_{b\in B(\sigma(\Gamma)),\lvert b\rvert \ge 2} \delta_b \:\: \subseteq \:\: \overline{M}_{0,n+1}
        \]
    in the notation of \Cref{SS:wonderfulvarietyblowup}; that is, $\tau(D_\Gamma) = \overline{M}_{\sigma(\Gamma)}$ with $\sigma(\Gamma) \in \rm{Ex}_n$. On the other hand, $\{\Delta_\sigma^h:\sigma \in \rm{Ex}_n\} = (\polyh \setminus \{\Delta_\top^h\})\setminus \diagh$ and it is exactly the strict transforms $\overline{M}_\sigma$ of $\Delta_\sigma^h$ for each $\sigma \in \rm{Ex}_n$ that are blown up to construct $Y_n$ from $\overline{M}_{0,n+1}$. Finally, if $D_{\Gamma_1} \cap D_{\Gamma_2}\ne \varnothing$, the condition $D_{\Gamma_1}<D_{\Gamma_2}$ is equivalent to the statement that $\sigma(\Gamma_1)$ is refined by $\sigma(\Gamma_2)$. This means that the ordering on the divisors $D_\Gamma$ in $\bcg_n$ gives an increasing ordering of $\rm{Ex}_n$, as desired. 
	\end{proof}

\Cref{multiscale_wonderful} now follows immediately from \Cref{proposition: wonderful variety from Mbar} and \Cref{proposition: Prabhat's construction2}. 

\begin{cor}
\label{C: special divisor2}
    $\bcg_n$ embeds into $\cA_n$ as the boundary divisor corresponding to the partition $\top$. 
\end{cor}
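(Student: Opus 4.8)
The plan is to combine the isomorphism $\bcg_n \cong Y_n$ of \Cref{multiscale_wonderful} with an identification of the closed immersion $\delta\colon Y_n \hookrightarrow \cA_n$, constructed in \Cref{Section 2}, as the inclusion of the boundary divisor of $\cA_n$ indexed by $\top$. Recall that $\cA_n$ is the wonderful model of $\polyh$ in $\bP^{n-1}$ and that this building set is maximal, so the irreducible boundary divisors of $\cA_n$ are indexed by $\polyh = \{ \Delta^h_\sigma : \sigma \in L_n \setminus \{\bot\} \}$; the one associated to the partition $\top$ is the divisor $D_{\Delta^h_\top}$ attached to $\Delta^h_\top = H$. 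Thus, once $\delta$ is available, it suffices to prove $\delta(Y_n) = D_{\Delta^h_\top}$.

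The key point is that $\Delta^h_\top = H$ is itself a hyperplane, hence an already-Cartier divisor of $\bP^{n-1}$, so in any of the wonderful models in \eqref{diagram: big diagram} built over $\bP^{n-1}$ whose building set contains $\Delta^h_\top$ -- namely $\cA_n$ and $\cR$ -- the associated boundary divisor is simply the strict transform of $H$. In particular, on $\cR$ this strict transform is the boundary divisor $D^{\cR}_{\Delta^h_\top}$. On the other hand, $\delta$ was defined by transporting the proper transform on $\cR$ of the section $P_0 = \gamma^{-1}(H)$ to $\cA_n$ via $\alpha$. Since $P_0$ is the distinguished section of $P$ lying over the strict transform of $H$ in $\mathrm{Bl}_{\Delta_\bot}\bP^{n-1}$, it is precisely the strict transform of $H$ in $P$, and therefore its proper transform on $\cR$ coincides with $D^{\cR}_{\Delta^h_\top}$. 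First I would record this equality, and then observe that the blow-down $\alpha\colon \cR \to \cA_n$, which only contracts the $\poly$-part, carries the boundary divisor $D^{\cR}_{\Delta^h_\top}$ onto $D_{\Delta^h_\top}$; by \Cref{lem: no blowups near B}, $\alpha$ is an isomorphism in a Zariski neighborhood of $\delta(Y_n)$, so this is an isomorphism of divisors. Chaining the identifications yields $\delta(Y_n) = D_{\Delta^h_\top}$, and hence $\bcg_n \cong Y_n \cong D_{\Delta^h_\top}$, as claimed.

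As an intrinsic consistency check, one may verify the conclusion directly from the blowup description of $\cA_n$: every element of $\polyh$ other than $\Delta^h_\top$ is contained in $H$ and is blown up before $\Delta^h_\top$ in the increasing-dimension order, so the strict transform of $H$ in $\cA_n$ is obtained by iteratively blowing up the strict transforms of the $\Delta^h_\sigma$ inside $H$, which is exactly the wonderful model of $\polyh \setminus \{ \Delta^h_\top \}$ over $H \cong \bP^{n-2}$, namely $Y_n$. This also exhibits the statement as the expected special case of a wonderful variety sitting as a divisor in its augmented wonderful variety. The main obstacle I anticipate is bookkeeping rather than any genuine difficulty: one must track that $\Delta^h_\top = H$ contributes a boundary divisor through a strict transform rather than an honest exceptional divisor, and that the disjointness of the $\poly$-centers from the strict transform of $H$ -- guaranteed by \cite{li2009wonderfulcompactificationarrangementsubvarieties}*{Thm. 1.1} and already used in the construction of $\delta$ -- is precisely what makes $\alpha$ an isomorphism near $\delta(Y_n)$.
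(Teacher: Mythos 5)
Your proposal is correct and follows essentially the same route as the paper: the paper's proof is exactly the composition $\bcg_n \cong Y_n \hookrightarrow \cA_n$ from \Cref{multiscale_wonderful} and the closed immersion $\delta$ of diagram \eqref{diagram: big diagram}. Your extra work identifying $\delta(Y_n)$ with the strict transform of $H$ (hence with the boundary divisor for $\top$) just makes explicit what the paper leaves implicit in the \Cref{Section 2} construction of $\delta$, where the proper transform of $P_0$ is noted to coincide with the dominant transform of $H$.
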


\begin{proof}
    The embedding is the composition $\bcg_n \cong Y_n \hookrightarrow \cA_n$, by \Cref{multiscale_wonderful} and \eqref{diagram: big diagram}. 
\end{proof}

As stated in \Cref{C: special divisor2}, the boundary divisor in $\cA_n$ represented by the rooted level tree in \Cref{Fig: special level tree in A_n2} is isomorphic to $\bcg_n$.
\begin{figure}[H]
\centering
    \begin{tikzpicture}
        \fill[black] (0,0) circle (3pt);
        \fill[black] (-1.5,-1.5) circle (3pt);
        \fill[black] (-.5,-1.5) circle (3pt);
        \fill[black] (1.5,-1.5) circle (3pt);
        
        \draw[thick] (0,0) -- (-1.5,-1.5);
        \draw[thick] (0,0) -- (-.5,-1.5);
        \draw[thick] (0,0) -- (1.5,-1.5);
        \draw (0,0) -- (0,.5);
        \draw(-1.5,-1.5) -- (-1.5,-2);
        \draw(-.5,-1.5) -- (-.5,-2);
        \draw(1.5,-1.5) -- (1.5,-2);

        \draw node at (.5,-1.5) {$\cdots$};
        \draw node at (-1.5,-2.2) {$p_1$};
        \draw node at (-.5,-2.2) {$p_2$};
        \draw node at (1.5,-2.2) {$p_n$};
        \draw node at (0,.7) {$p_\infty$};
    \end{tikzpicture}
\caption{Dual tree to boundary divisor in $\cA_n$ isomorphic to $\bcg_n$}
\label{Fig: special level tree in A_n2}
\end{figure}

Geometrically, the identification of the boundary divisor in $\cA_n$ with $\bcg_n$ amounts to stabilization of the underlying nodal curve of the multiscaled line -- the multiscale differential on the resulting stable curve is then given by the collection of differentials on the stable components of the multiscaled line. Furthermore, it follows from the proof of \Cref{proposition: Prabhat's construction2} that the divisor in $\bcg_n$ corresponding to a two-level tree $\Gamma$ is sent to the codimension $2$ stratum of $\cA_n$ corresponding to the chain $\sigma(\Gamma) < \top$ in $L_n$ in the notation of \cite{robotis2024spaces}*{\S 3}.

\begin{rem}
    The construction of $\cA_n$ is modular in nature, with the closed points of $\cA_n$ being defined as complex projective isomorphism classes of multiscale lines \cite{augmentedstability}*{\S 4}; nevertheless, there is no obvious moduli functor which $\cA_n$ represents. Similarly, though \cite{bcggm} proves the existence of a moduli functor which $\cB_n$ represents locally, there does not seem to be a complete global description. By contrast, the variety $\overline{P}_n$ introduced in \cite{Zahariucmarkednodalcurvesvfs} represents a moduli functor $F_n$ such that $\overline{P}_n(\bC)$ is a set of marked genus $0$ curves with logarithmic vector fields, which, as explained in \cite{robotis2024spaces}*{\S 5.1}, give rise to differentials on terminal components. Consequently, there is a corresponding universal family $\cU_n \to \overline{P}_n$ of marked $n+1$ marked genus $0$ curves with differentials. 

    On the other hand, there is a resolution morphism $\xi:\cA_n \to \overline{P}_n$ described in \cites{robotis2024spaces,zahariuc2024smallresolutionsmodulispaces} which has the effect of forgetting the level structure and non-terminal differentials of points of $\cA_n$. Based on these considerations, it seems reasonable to expect that there is a natural moduli functor $E_n$ which $\cA_n$ represents such that:
    \begin{enumerate}
        \item $\xi:\cA_n \to \overline{P}_n$ corresponds to a forgetful functor $E_n \to F_n$ for each $n$; and 
        \item $\cB_n$ represents a closed subfunctor of $E_n$, corresponding to the closed immersion $\cB_n \hookrightarrow \cA_n$ of \Cref{C: special divisor2}.
    \end{enumerate}
\end{rem}

We are now also able to prove \Cref{prop: resolution of A to B map}. 

\begin{proof}[Proof of \Cref{prop: resolution of A to B map}]
    Since $\bcg_n = Y_n$ by \Cref{multiscale_wonderful}, the diagram in the statement of the proposition is part of diagram \eqref{diagram: big diagram}, and commutativity was shown in \Cref{Section 2}. The description of the $\bP^1$-bundle $P$ is Lemma \ref{lem: description of P^1-bundle}. The fact that $\alpha$ is a composition of blowups with smooth centers disjoint from $\bcg_n \hookrightarrow \cA_n$ follows from the construction in \Cref{Section 2} and \Cref{lem: no blowups near B}. 
\end{proof}

Our next corollary involves the Chow ring $\underline{\mathrm{CH}}^*(M)$ of the graphic matroid $M=M(K_n)$. We refer to \cite{bhmpw2022semismalldecompositionofchowringofmatroid} for the exact definitions of matroids and their Chow rings. 

\begin{cor}
\label{C:chowring2}
    The Chow ring $\mathrm{CH}^*(\bcg_n)$ of $\bcg_n$ is isomorphic to the Chow ring $\underline{\mathrm{CH}}^*(M)$ of the graphic matroid $M=M(K_n)$.
\end{cor}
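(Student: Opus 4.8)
The plan is to deduce \Cref{C:chowring2} directly from \Cref{multiscale_wonderful} together with the combinatorial identification of $Y_n$ as a wonderful variety for $M(K_n)$ established in \Cref{lem: Y is wonderful variety}. First I would invoke the isomorphism $\bcg_n \cong Y_n$ of \Cref{multiscale_wonderful}, which reduces the problem to computing $\mathrm{CH}^*(Y_n)$. Since isomorphic varieties have isomorphic Chow rings, it suffices to identify $\mathrm{CH}^*(Y_n)$ with $\underline{\mathrm{CH}}^*(M(K_n))$.

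Next I would apply \Cref{lem: Y is wonderful variety}, which identifies $Y_n$ with the wonderful variety $\underline{X}_V$ associated to the graphic matroid $M(K_n)$ represented by the positive root system of $\mathfrak{sl}_n$. The key external input is then the theorem of Feichtner--Yuzvinsky \cite{feichtneryuzvinskychowring}*{Cor. 2}, which computes the Chow ring of a wonderful model of a hyperplane arrangement in terms of the combinatorics of the associated matroid. More precisely, this result shows that for a wonderful model built from the maximal (or any) building set of a realizable matroid $M$, the Chow ring of the wonderful variety is presented by generators indexed by the flats in the building set, subject to the linear relations coming from the representation and the quadratic relations encoding incomparability of flats. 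This presentation is exactly the combinatorial definition of $\underline{\mathrm{CH}}^*(M)$ as given in \cite{bhmpw2022semismalldecompositionofchowringofmatroid}.

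Concretely, I would match up the two presentations: the boundary divisors $D_G$ of $Y_n$, indexed by $G \in \polyh \setminus \{\Delta^h_\top\}$ (equivalently by the proper nonempty flats of $M(K_n)$, i.e. the partitions in $L_n \setminus \{\bot,\top\}$), correspond to the generators $x_F$ of $\underline{\mathrm{CH}}^*(M)$ indexed by proper flats $F$ of the matroid. The simple normal crossings property of the boundary, guaranteed by the wonderful construction of \cite{li2009wonderfulcompactificationarrangementsubvarieties}*{Thm. 1.1}, yields the quadratic (nested-set) relations, while the linear relations arise from pulling back the hyperplane class under $\beta: Y_n \to H$ and expressing it in terms of the boundary divisors via the representation of $M(K_n)$.

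The main obstacle, such as it is, lies not in any new geometry but in carefully verifying that the building set and the matroid data underlying the Feichtner--Yuzvinsky computation agree precisely with the conventions of \cite{bhmpw2022semismalldecompositionofchowringofmatroid}, so that the resulting presentations of $\mathrm{CH}^*(Y_n)$ and $\underline{\mathrm{CH}}^*(M)$ coincide on the nose rather than merely being abstractly isomorphic. Since \Cref{lem: Y is wonderful variety} has already pinned down the representation of $M(K_n)$ as the positive root system of $\mathfrak{sl}_n$, and since \cite{feichtneryuzvinskychowring} is stated for exactly this type of data, this verification is essentially bookkeeping. Thus the corollary follows formally, and the proof is short: combine \Cref{multiscale_wonderful}, \Cref{lem: Y is wonderful variety}, and \cite{feichtneryuzvinskychowring}*{Cor. 2}.
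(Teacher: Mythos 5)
Your proposal is correct and follows essentially the same route as the paper: the paper's proof is exactly the combination of \Cref{multiscale_wonderful}, \Cref{lem: Y is wonderful variety}, and a citation of \cite{bhmpw2022semismalldecompositionofchowringofmatroid}*{Rem. 2.13} or \cite{feichtneryuzvinskychowring}*{Cor. 2}. Your additional discussion of matching presentations is a reasonable expansion of what the paper leaves implicit, but it introduces no new ideas beyond the paper's one-line argument.
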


\begin{proof}
    This follows from \cite{bhmpw2022semismalldecompositionofchowringofmatroid}*{Rem. 2.13} or \cite{feichtneryuzvinskychowring}*{Cor. 2}.
\end{proof}

In particular, $\rm{CH}^*(\bcg_n)$ is isomorphic to the ring $\bZ[x_\Gamma]/(I+J)$, where there is one variable $x_\Gamma$ for each two-level stable marked level trees corresponding to a divisor of $\bcg_n$ and
\begin{enumerate}
    \item $I$ is an ideal of linear forms:
    \[
        \sum_{i\sim_{\sigma(\Gamma)}j} x_\Gamma \:\: - \:\:\sum_{k\sim_{\sigma(\Gamma)}\ell} x_\Gamma 
    \]
    for all $i,j,k,\ell$ in $[n]$ such that $i \neq j$ and $k \neq \ell$; and \vspace{2mm}
    \item $J$ is an ideal of quadratic forms: $x_\Gamma\cdot x_{\Gamma'}$ if $\sigma(\Gamma)$ and $\sigma(\Gamma')$ are not comparable in $L_n$.
\end{enumerate}

Consequently, $\mathrm{CH}^*(\bcg_n)$ is generated by the boundary divisors $D_\Gamma$, with relations generated in degrees one and two. The degree 1 relations in (1) are linear WDVV relations, while the quadratic relations in (2) reflect the geometrically obvious relation of $D_{\Gamma_1}\cdot D_{\Gamma_2}=0$ when the divisors $D_{\Gamma_1}$ and $D_{\Gamma_2}$ are disjoint. This refines the result in \cite{Devkotacohomology} and draws a parallel to the Chow ring structure of $\overline{M}_{0,n+1}$ -- see \cite{keel} or \cite{acg2}*{\S 17.7}.
\section{\texorpdfstring{$\bC^*$}{C*}-action on \texorpdfstring{$\cA_n$}{An}}\label{Section 4}

There is a natural $\bC^*$-action on $\cA_n$, which we now explain -- see \Cref{S:introduction} for some of the relevant notation. Recall that for any $\Sigma \in \cA_n$, the irreducible components furthest from the root are equipped with non-zero meromorphic differentials with a unique pole of order $2$ at the node connecting them to the higher levels of the curve. We define 
\[ 
    \mu:\bC^* \times \cA_n \to \cA_n \quad \text{by} \quad \mu(\lambda,\Sigma) = \Sigma^\lambda
\]
where $\Sigma^\lambda$ is the multiscale line obtained from $\Sigma$ by scaling all bottom level differentials by $\lambda$. It is clear from the definition that $\mu$ preserves the combinatorial type of $\Sigma$; i.e. we have an equality of the corresponding level trees $\Gamma(\Sigma) = \Gamma(\Sigma^\lambda)$ for all $\lambda \in \bC^*$.

\begin{lem}
\label{L:action}
    The map $\mu$ defines a group action in the category of varieties over $\bC$. Furthermore, the action is generically free and stabilizer groups are either trivial or $\bC^*$.
\end{lem}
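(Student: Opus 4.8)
The plan is to verify the group action axioms directly from the explicit formula $\mu(\lambda,\Sigma) = \Sigma^\lambda$, and then analyze stabilizers by tracking how the $\bC^*$-scaling interacts with the complex projective equivalence relation defining $\cA_n$. First I would check that $\mu$ is a morphism of varieties: since scaling the bottom-level differentials by $\lambda$ is an algebraic operation that respects the equivalence relation defining $\cA_n$ (and preserves the combinatorial type, as already noted), and since $\cA_n$ is constructed as a wonderful model with a concrete description, the map $\mu$ is regular. The identity axiom $\Sigma^1 = \Sigma$ is immediate, and the compatibility $\Sigma^{\lambda\mu} = (\Sigma^\mu)^\lambda$ follows because scaling differentials by $\lambda$ and then by $\mu$ is the same as scaling by $\lambda\mu$; one must only confirm that these identities hold after passing to complex projective equivalence classes, which they do since the equivalence relation (rescaling level-by-level \emph{except} the bottom level) is visibly compatible with bottom-level scaling.

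For the stabilizer analysis, the key observation is that $\lambda \in \bC^*$ fixes $\Sigma$ if and only if $\Sigma^\lambda$ is complex projectively equivalent to $\Sigma$. I would split into cases according to the combinatorial type $\Gamma(\Sigma)$. On the open locus $\cA_n^\circ$ of irreducible curves, the differential $\omega$ is the unique (up to the equivalence, which on irreducible curves does \emph{not} rescale the bottom differential) meromorphic form with a double pole, so scaling it by $\lambda \neq 1$ produces a genuinely different point; hence the action is free there, giving generic freeness. In the reducible case, a nontrivial stabilizer arises precisely when the bottom-level scaling by $\lambda$ can be undone by the projective automorphisms and the level-wise rescalings already quotiented out in the definition of complex projective equivalence.

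The main obstacle, and the part requiring genuine care, is showing that the stabilizer is always \emph{either trivial or all of} $\bC^*$, ruling out finite nontrivial stabilizers. The approach here is to observe that the condition ``$\Sigma^\lambda \sim \Sigma$'' is encoded by the existence of automorphisms and rescaling tori whose effect on the bottom differentials exactly cancels multiplication by $\lambda$; because these compensating data vary continuously (indeed algebraically) in $\lambda$ and the set of admissible $\lambda$ forms a subgroup of $\bC^*$, the stabilizer is a closed algebraic subgroup of $\bC^*$. The only closed subgroups of $\bC^*$ that could appear are $\bC^*$ itself, finite cyclic groups $\mu_k$, and the trivial group. To eliminate the finite cyclic possibility, I would argue that if $\lambda$ of finite order $k>1$ stabilizes $\Sigma$, then by the structure of the equivalence relation the compensating automorphism must scale the bottom differentials by a continuous family of values, forcing the stabilizer to contain a positive-dimensional subgroup and hence to equal $\bC^*$. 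Concretely, this reduces to the fact that whenever the bottom-level scaling can be absorbed at all, it can be absorbed for every $\lambda$ simultaneously—so the fixed locus (where the stabilizer is all of $\bC^*$) is exactly the locus where the bottom-level differentials are already accounted for by the quotiented-out tori, namely the boundary stratum containing $\delta(Y_n)$ identified in \Cref{C: special divisor2}.
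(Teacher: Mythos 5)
Your overall strategy is reasonable, but the stabilizer analysis contains a concrete error that propagates through the rest of the argument. You claim the action is free on the irreducible locus $\cA_n^\circ$ because scaling $\omega$ by $\lambda\neq 1$ ``produces a genuinely different point.'' This forgets that points of $\cA_n$ are isomorphism classes: an automorphism of the marked curve can absorb the scaling. Concretely, if all marked points coincide, say at $0$ in $(\bP^1,\infty,dz,0,\ldots,0)$, then $z\mapsto\lambda z$ fixes $\infty$ and $0$ and pulls back $dz$ to $\lambda\,dz$, so this point of $\cA_n^\circ$ is fixed by all of $\bC^*$; it is exactly the point $q$ of \Cref{Ex:n=3}. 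For the same reason, your concluding identification of the fixed locus with ``the boundary stratum containing $\delta(Y_n)$'' is false: the fixed locus meets the interior (at $q$) and contains further isolated boundary points (the $q_{12},q_{13},q_{23}$ of \Cref{Ex:n=3}), not just the divisor $\bcg_n\subset\cA_n$ of \Cref{C: special divisor2}. The correct characterization, which the paper proves, is that $\Sigma$ has nontrivial stabilizer if and only if on \emph{each} terminal component all marked points lying on it coincide.

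The second gap is that your exclusion of finite stabilizers is circular: the assertion that ``whenever the bottom-level scaling can be absorbed at all, it can be absorbed for every $\lambda$ simultaneously'' is precisely the dichotomy to be proved, and you supply no mechanism for it. The paper's mechanism is the periods $\Pi_{ij}(\Sigma)=\int_{p_i}^{p_j}\omega_\tau$ between marked points on a common terminal component: they transform as $\Pi_{ij}(\Sigma^\lambda)=\lambda\,\Pi_{ij}(\Sigma)$, so if some terminal component carries two \emph{distinct} marked points then $\Pi_{ij}(\Sigma)\neq 0$ and no $\lambda\neq 1$ (in particular no root of unity) can stabilize $\Sigma$; if instead every terminal component has all its marked points coincident, the explicit component-wise automorphisms above give the full $\bC^*$. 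This yields both generic freeness and the trivial-or-$\bC^*$ dichotomy at once, with no need to classify closed subgroups of $\bC^*$. Finally, note that the paper establishes algebraicity of $\mu$ by an actual computation in the coordinate charts of \cite{augmentedstability}, namely $t_i(\Sigma^\lambda)=\lambda^{-\delta_{i\ell}}t_i(\Sigma)$ and $z_{ij}(\Sigma^\lambda)=\lambda^{\epsilon_{ij}}z_{ij}(\Sigma)$; your appeal to scaling being ``an algebraic operation'' on a space whose algebraic structure is only presented through such charts (or through the wonderful-model construction) does not by itself establish that $\mu$ is a morphism.
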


\begin{proof}
    The reader can readily verify that $\mu$ is an action in the category of sets. To verify algebraicity, we use local coordinates. Fixing a level tree $\Gamma$, we obtain an open coordinate neighborhood $U_\Gamma$ containing the stratum $S_\Gamma$ -- see \cite{augmentedstability}*{\S 4.1}. There are two types of coordinate functions on $U_\Gamma$: $t_1,\ldots, t_\ell$, which control the formation of new levels in the tree, and $\{z_{ij}\}$ which record distances between nodes on the same irreducible component of $\Sigma \in U_\Gamma$. 
    
    A calculation shows that the coordinates transform as $t_i(\Sigma^\lambda) = \lambda^{-\delta_{i\ell}} t_i(\Sigma)$ where $\delta_{i\ell}$ is the Kronecker delta and $z_{ij}(\Sigma^\lambda) = \lambda^{\epsilon_{ij}} z_{ij}(\Sigma)$, where $\epsilon_{ij}$ is $1$ if $p_i$ and $p_j$ are on the same irreducible component and $0$ otherwise. Next, $\Sigma$ has nontrivial stabilizer group if and only if for each terminal component $\Sigma_\tau \subset \Sigma$, we have $p_i,p_j \in \Sigma_\tau$ implies that $p_i = p_j$. Indeed, otherwise there exist $p_i\ne p_j$ on some $\Sigma_\tau$ so that $\Pi_{ij}(\Sigma^\lambda) = \lambda\cdot \Pi_{ij}(\Sigma) \ne \Pi_{ij}(\Sigma)$, where here $\Pi_{ij}(\Sigma) = \int_{p_i}^{p_j} \omega_\tau.$ This reasoning shows that the locus of points with trivial stabilizer is Zariski open and nonempty and if $\Sigma$ has a nontrivial stabilizer, then it is $\bC^*$.
\end{proof}

\begin{rem}
    Note that the functions $\Pi_{ij}:\cA_n\to \bP^1$ are $\bC^*$-equivariant with respect to this action on $\cA_n$ and the usual action on $\bP^1$.
\end{rem}

\begin{ex}[$n=3$]
\label{Ex:n=3}
    $\cA_3$ is the surface obtained from $\bP^2$ by blowing up three points on a line $\ell$. We compute the fixed loci for the $\bC^*$-action: in the locus of irreducible curves $\cA_3^\circ$, the only fixed point is $q = (\bP^1,\infty, dz,p_1,p_2,p_3)$ where $p_1 = p_2 = p_3\ne \infty$. In the boundary $\cA_3\setminus\cA_3^\circ$ there are three isolated fixed points $q_{ij}$ corresponding to $p_i = p_j$ for $\{i<j\}\subset \{1,2,3\}$. Furthermore, the divisor $D$ obtained as the strict transform of $\ell$ is fixed. All chains of orbit closures contain $q$, so it is impossible to obtain a reasonable quotient for this action directly.

    \begin{figure}[H]
		\centering
		\begin{tikzpicture}[scale =1.3]
			\draw [ultra thick] (-1.5,0) to (1.5,0) node [anchor=west] {$D \cong \bcg_3$};
			\draw (-1,-0.5) -- (-1,1);
			\draw (0,1) -- (0,-0.5) node [anchor = north] {$\cA_3$};
			\draw (1,-0.5) -- (1,1);
			
			\draw (-0.5,2) -- (-1.1,0.4);
			\draw (-0.5,2) -- (0.1,0.4);
			\draw (-0.5,2) -- (1.3,0.4);
			
			\fill (-1,0.67) circle (1.5pt) node [anchor = west] {$q_{12}$};
			\fill (0,0.67) circle (1.5pt) node [anchor = west] {$q_{13}$};
			\fill (1,0.67) circle (1.5pt) node [anchor = west] {$q_{23}$};
			\fill (-0.5,2) circle (1.5pt) node [anchor = west] {$q$};
			
			\draw [dotted] (-0.8,2) to (-3.2,2);
			\fill (-3.5,2) circle (2pt); \draw (-3.5,2) -- (-3.5,1.5) node [anchor=north] {$p_1=p_2=p_3$}; \draw (-3.5,2) -- (-3.5,2.3);
			\draw [dotted] (-1.2,0.67) to (-3.2,0.67);
			\fill (-3.5,0.67) circle (2pt);  \fill (-3,0.17) circle (2pt); \fill (-4,0.17) circle (2pt);  \draw (-3,0.17) -- (-3.5,0.67) -- (-4,0.17); \draw (-3.5,0.67) -- (-3.5,0.97);
			\draw (-3,0.17) -- (-3,0.17-0.5) node [anchor=north] {$p_3$}; \draw (-4,0.17) -- (-4,0.17-0.5) node [anchor=north] {$p_1=p_2$}; 	
			\fill (4,1.5) circle (2pt);  \fill (3.5,1) circle (2pt); \fill (4,1) circle (2pt);  \fill (4.5,1) circle (2pt); 
			\draw (4,1.5) -- (3.5,1) -- (3.5,0.5) node [anchor=north] {$p_1$};	
			\draw (4,1.5) -- (4,1) -- (4,0.5) node [anchor=north] {$p_2$};	
			\draw (4,1.5) -- (4.5,1) -- (4.5,0.5) node [anchor=north] {$p_3$};	
                \draw (4,1.5) -- (4,1.8);
			\draw [dotted] (3.3,1) -- (1.2,0.05);
		\end{tikzpicture}
	\caption{The fixed locus of $\mu$ for $n=3$ is $D \cup \{q,q_{12},q_{13},q_{23}\}$.}
	\label{Fig: fixed locus for n=3}
	\end{figure}
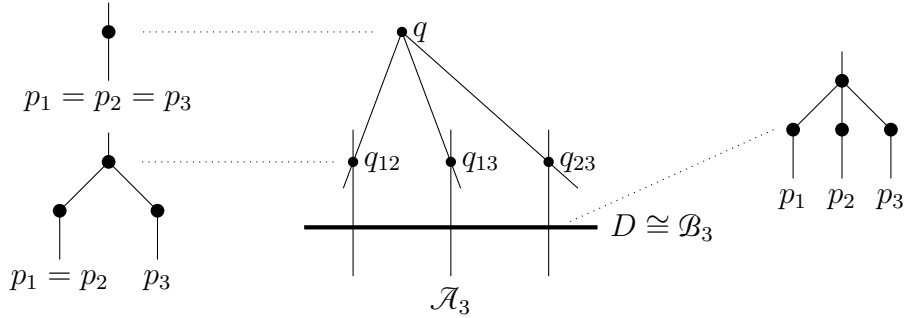
    
    We consider next the blowup $X = \Bl_q(\cA_3)$. $\bC^*$ acts trivially on $\bP(T_q\cA_3)$ so $\mu$ induces an action on $X$ which is trivial on the exceptional divisor. All orbit closures with respect to the induced $\bC^*$-action are now disjoint and the quotient by $\bC^*$ gives a map $X\to D$ with generic fiber $\bP^1$, where $D \cong \bP^1$ is the strict transform of $\ell$. There are three distinguished points $x_1,x_2,x_3$ of $D$, corresponding to the points of intersection with the three exceptional divisors in $\cA_3$. The fibers of $X\to D$ are $\bP^1$ for all $x\in D\setminus \{x_1,x_2,x_3\}$ and degenerate to $A_2$ fibers over each $x_i$. In particular, $D$ is equipped with a flat family of rational cycles in $\cA_3$ coming from the $\bC^*$-action.
\end{ex}

\subsection{\texorpdfstring{$\bcg_n$}{Bn} as a Chow quotient of \texorpdfstring{$\cA_n$}{An}}
Our next objective is to realize $\bcg_n$ as the normalized \emph{Chow quotient} of $\cA_n$ by the $\bC^*$-action described above. Chow quotients were introduced by Kapranov as a method of forming quotients of projective varieties by actions of algebraic groups without recourse to GIT -- see \cite{KapranovChow}. 

To define the Chow quotient, consider the action of an algebraic group $G$ on a projective variety $X$. One can find a $G$-invariant open subset $U\subset X$ such that for all $x\in U$ the homology class $\beta$ of the orbit closure of $x$ is independent of $x$. Then, sending $x$ to its orbit closure gives an injection $U/G\hookrightarrow \Chow_\beta(X)$ into the Chow variety of cycles of class $\beta$ in $X$. Since $\Chow_\beta(X)$ is proper (in fact, projective), we define the Chow quotient $X\sslash G$ as the closure of $U/G$ in $\Chow_\beta(X)$. The Chow quotient carries a tautological flat family of cycles obtained from pulling back the universal family $\cU_\beta \to \Chow_\beta(X)$ -- see \cite{kollarrationalcurves}*{\S I Thm. 3.21}.

In the special case where $G = \bC^*$, the theory takes on a rather explicit form and is developed in the recent work \cite{ChowquotientCstar}. There, the authors write $\cC(X)$ for the normalisation of the Chow quotient of a variety $X$ by $\bC^*$. We establish some facts about the $\bC^*$-action on $\cA_n$. 

By the discussion in \Cref{Section 2}, the map $\lambda:\cR \to \bcg_n$ is flat with generic fiber $\bP^1$ and with all other fibers singular rational curves of $A_k$-type. Furthermore, for each $p \in \bcg_n$ the fiber $\lambda^{-1}(p) = \cR_p$ has homology class $\xi \in \rm{H}_2(\cR,\bZ)$ which is independent of $p$. This gives an injection $\bcg_n \to\Chow_\xi(\cR)$. Now, the proper morphism $\alpha:\cR\to \cA_n$ induces a morphism $\alpha_*:\Chow_\xi(\cR)\to \Chow_{\alpha_*\xi}(\cA_n)$ which sends $\cR_p$ to $\alpha(\cR_p)$ by \cite{barletcycles}*{Lem. 4.1.11}. Denote the composite morphism $\chi:\bcg_n \to \Chow_{\alpha_*\xi}(\cA_n)$. Also, denote by $\bcg_n^\circ$ the open dense subset of $\bcg_n$ consisting of irreducible curves. When embedded in $\cA_n$, this corresponds to the locus of curves having dual tree exactly that pictured in \Cref{Fig: special level tree in A_n2}.

\begin{thm}\label{thm: normalized Chow quotient}
    The morphism $\chi$ induces an isomorphism $\bcg_n\to \cC(\cA_n)$, where $\cC(\cA_n)$ is the normalized Chow quotient of $\cA_n$ by $\bC^*$.
\end{thm}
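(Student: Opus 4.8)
The plan is to prove that $\chi$ is a bijective morphism onto the unnormalized Chow quotient $\cA_n\sslash\bC^*$ and then to lift this to the normalization. Set $\cQ\coloneqq\cA_n\sslash\bC^*$, the closure in $\Chow_\beta(\cA_n)$ of the image of the tautological injection $U/\bC^*\hookrightarrow\Chow_\beta(\cA_n)$, where $U\subset\cA_n$ is the $\bC^*$-invariant open locus on which the orbit-closure class is a constant $\beta$. By \Cref{C: special divisor2} we identify $\bcg_n$ with the fixed divisor $\delta(\bcg_n)\subset\cA_n$, which lies in the $\bC^*$-fixed locus; concretely $U/\bC^*$ is identified with $\bcg_n^\circ$ via the retraction $\cA_n^{\circ\circ}\to\bcg_n$ of the introduction.

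The first step is to show that $\chi|_{\bcg_n^\circ}$ coincides with this tautological injection. By the construction in \Cref{Section 2}, $\lambda\colon\cR\to\bcg_n$ carries the $\bC^*$-action inherited from $P$, which is $\alpha$-compatible with the action on $\cA_n$; hence each fiber $\cR_p$ is $\bC^*$-invariant and $\alpha(\cR_p)$ is a $\bC^*$-invariant cycle. For $p\in\bcg_n^\circ$ the fiber $\cR_p$ is a single $\bP^1$ lying in the locus where $\alpha$ is an isomorphism, so $\alpha(\cR_p)$ is precisely the closure of the orbit through the corresponding point, and in particular $\alpha_*\xi=\beta$. Thus $\chi$ restricts on the dense open $\bcg_n^\circ$ to the map defining $\cQ$. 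Since $\bcg_n$ is irreducible and proper, $\chi(\bcg_n)$ is closed and contains $U/\bC^*$, whence $\chi(\bcg_n)=\cQ$ and $\chi\colon\bcg_n\to\cQ$ is surjective.

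For injectivity I would anchor each cycle to the fixed divisor. The section $\widetilde{P_0}\cong Y_n$ of $\lambda$, namely the proper transform of $P_0$, is mapped isomorphically onto $\delta(\bcg_n)$ by $\alpha$, which is an isomorphism near $\delta(Y_n)$ by \Cref{lem: no blowups near B}. Every fiber $\cR_p$ meets $\widetilde{P_0}$ in the single point lying over $p$, and its remaining components, coming from the $\poly$-blowups in the exceptional locus of $\alpha$ and from the $\widetilde{P_\infty}$-end, are disjoint from $\widetilde{P_0}$ and map away from $\delta(\bcg_n)$. Hence $\chi(p)\cap\delta(\bcg_n)=\{\delta(p)\}$ as sets, so $p$ is recovered from $\chi(p)$ and $\chi$ is injective.

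Finally, since $\bcg_n$ is smooth, hence normal, and $\chi$ is dominant, the universal property of normalization lifts $\chi$ through the normalization morphism $\cC(\cA_n)\to\cQ$ to a morphism $\tilde\chi\colon\bcg_n\to\cC(\cA_n)$. As $\chi$ is bijective and the normalization is finite with $\cC(\cA_n)$ irreducible of the same dimension, $\tilde\chi$ is again bijective; being a bijective birational morphism between normal varieties, it is an isomorphism by Zariski's main theorem. I expect the main obstacle to be the first step, that is, verifying that the pushed-forward fibers $\alpha(\cR_p)$ are genuine orbit closures rather than merely $\bC^*$-invariant cycles and that they sweep out exactly $U/\bC^*$, since this is where the $\bC^*$-equivariance of the entire construction of \Cref{Section 2} and the identification $\beta=\alpha_*\xi$ must be pinned down; the injectivity and normalization steps are comparatively formal once \Cref{C: special divisor2} and \Cref{lem: no blowups near B} are invoked.
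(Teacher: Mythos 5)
Your first step---the identification of $\chi(p)$ for $p\in\bcg_n^\circ$ with an orbit closure---rests on the claim that $\cR_p$ ``lies in the locus where $\alpha$ is an isomorphism,'' and this claim is false. For $p\in\bcg_n^\circ$ the fiber $\cR_p$ is (the isomorphic preimage under $\nu$ of) the fiber of the $\bP^1$-bundle $\rho:P\to Y_n$ over $p$, so it joins the section $\widetilde{P_0}$ to the section $\widetilde{P_\infty}$. But $\widetilde{P_\infty}$ maps to the point $[1:\cdots:1]$ under $\cR\to\bP^{n-1}$, and since every element of $\polyh$ lies inside $H$, the preimage of $[1:\cdots:1]$ in $\cA_n$ is a single point $q$; hence $\alpha$ contracts $\widetilde{P_\infty}$ to $q$, so $\widetilde{P_\infty}$ lies in the exceptional locus of $\alpha$ and every fiber $\cR_p$ meets it. Indeed, every cycle $\chi(p)$ passes through $q$---this is exactly the phenomenon of \Cref{Ex:n=3}, where all orbit closures contain $q$, and it is the very reason the auxiliary space $\cR$ is needed instead of quotienting $\cA_n$ directly. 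Your conclusion (that $\alpha(\cR_p)$ is a genuine orbit closure taken with multiplicity one, that these cycles sweep out $U/\bC^*$, and that $\alpha_*\xi=\beta$) is true, but it requires an argument you do not give: one needs the compatibility of the action of \Cref{lemma: blowing up divisors in a distinguished section} on $\cR$ with the modular action on $\cA_n$ under $\alpha$ (asserted, not established), plus the fact that $\alpha|_{\cR_p}$ is birational onto an invariant irreducible curve containing a dense free orbit. The paper supplies precisely this content by a coordinate computation: in the chart $(U,t,z_{13},\ldots,z_{1n})$ of \cite{augmentedstability}*{Lem. 4.4}, where $\bC^*$ acts with weight $-1$ on $t$ and weight $0$ on the $z_{1j}$, it identifies $U/\bC^*$ with $\bcg_n^\circ$ (an identification you only assert ``via the retraction'') and shows that $\alpha$ maps the component of $\lambda^{-1}(p)$ through $p$ bijectively onto the closure of $\phi_n^{-1}(p)\cap U\cong\bA^1$, i.e.\ onto the orbit closure. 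You correctly flag this step as the main obstacle, but as written its justification would fail.

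The rest of your outline is sound and parallels the paper: surjectivity from properness of $\bcg_n$ together with density of $U/\bC^*$ in the Chow quotient, then the universal property of normalization and Zariski's main theorem. Your injectivity argument, however, is a genuinely different---and valid---alternative to the paper's. The paper recovers $p$ from the cycle $\chi(p)$ by applying the resolved map $\phi_n$, using $\alpha^{-1}(U)\cap\cR_p\neq\varnothing$ and $\phi_n(\alpha(\cR_p))=p$; you instead recover $p$ as the unique point of $\mathrm{supp}\,\chi(p)\cap\delta(\bcg_n)$, using \Cref{lem: no blowups near B} to conclude $\alpha^{-1}(\delta(\bcg_n))=\widetilde{P_0}$, so that only the end component of the chain $\cR_p$ can touch the fixed divisor. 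This anchoring-to-the-fixed-divisor argument is independent of the problematic first step and is arguably cleaner than the paper's; but injectivity plus properness alone only show that $\chi$ maps $\bcg_n$ bijectively onto \emph{some} closed subvariety of $\Chow_{\alpha_*\xi}(\cA_n)$---identifying that subvariety as the Chow quotient is exactly the content of the step that is missing.
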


\begin{proof}
    We first verify that $\chi:\bcg_n \to \Chow_{\alpha_*\xi}(\cA_n)$ is injective on closed points. For this, recall from \eqref{diagram: big diagram} that there is a commutative diagram 
    \[
    \begin{tikzcd}
        \cR\arrow[dr,"\lambda"]\arrow[d,"\alpha",swap]&\\
        \cA_n\arrow[r,dashed,"\phi_n"]&\bcg_n
    \end{tikzcd}
    \]
    where the rational map $\phi_n$ is defined (at least) on the open set $U \subset \cA_n$ where $\Pi_{ij} \ne 0$ for all $1\le i < j \le n$; that is, where $p_i\ne p_j$ for all $i\ne j$ -- see \Cref{rem: regular locus}. Note that the image of the section $\delta$ is contained in $U$ and so $\phi_n$ maps surjectively to $\bcg_n$. Thus, $\alpha^{-1}(U)\cap \lambda^{-1}(p)\ne \varnothing$ for all $p\in \bcg_n$ and $\phi_n(\alpha(\cR_p)) = p$.

    In \cite{augmentedstability}*{Lem. 4.4}, a collection of algebraic coordinate patches is constructed for $\cA_n$. In particular, there is a coordinate system around $\bcg_n^\circ$, $(U,t,z_{13},\ldots, z_{1n})$, where $t = \Pi_{12}^{-1}$ and $z_{1j} = \Pi_{1j}/\Pi_{12}$ and $U$ is the $\bC^*$-invariant open set containing $\bcg_n^\circ$ and all elements of $\cA_n^\circ$ such that $\Pi_{ij}\ne 0$ for all $1\le i <j\le n$. From this description it follows that $\bC^*$ acts with weight $0$ on the coordinates $z_{13},\ldots, z_{1n}$ and weight $-1$ on $t$. The coordinates realize $U$ as the complement of a $\bC^*$-invariant hyperplane arrangement in $\Spec \bC[t,z_{13},\ldots, z_{1n}]$ and thus $U/\bC^*$ is identified with its image in $\Spec \bC[t,z_{13},\ldots, z_{1n}]^{\bC^*} = \Spec \bC[z_{13},\ldots, z_{1n}]$ which is $\bcg_n^\circ$.

    Next, \Cref{lemma: blowing up divisors in a distinguished section} implies that for any $p\in \bcg_n^\circ$ one has $\lambda^{-1}(p) \cong \bP^1$ and the description of $U$ implies that $\phi_n^{-1}(p) \cap U \cong \bA^1$. Since the arrangement blown up to form $\cR$ from $\cA_n$ is contained in the complement of $U$, $\alpha$ maps the component of $\lambda^{-1}(p)$ containing $p\in \bcg_n \subset \cR$ bijectively onto the component of $\phi_n^{-1}(p)$ containing $p\in \bcg_n \subset \cA_n$. Consequently, for all $x\in U$ the Chow class of the orbit closure of $x$ is constant since it is of the form $\alpha(\cR_p)$. So, we obtain a commutative diagram:
    \[
        \begin{tikzcd}
            U/\bC^*\arrow[r]&\Chow_{\alpha_*\xi}(\cA_n)\\
            \bcg_n^\circ\arrow[r,hook]\arrow[u,"\sim"]&\bcg_n\arrow[u,"\chi",swap]
        \end{tikzcd}
    \]
    which implies that $\chi$ maps bijectively on closed points onto $\cA_n\sslash \bC^*$. Since $\cB_n^\circ = U\cap \bcg_n \to U/\bC^*$ is an isomorphism, it follows that $\chi$ is birational onto its image. Therefore, by the universal property of normalization and Zariski's main theorem \cite{stacks-project}*{\href{https://stacks.math.columbia.edu/tag/0AB1}{Tag 0AB1}}, there is an induced isomorphism $\chi:\bcg_n \to \cC(\cA_n)$.
\end{proof}

\begin{rem}
    There is another way to construct the morphism $\cB_n \to \Chow_{\alpha_*\xi}(\cA_n)$. Indeed, one can construct an explicit family of cycles $\cZ \subseteq \cA_n \times \cB_n$ over $\cB_n$ whose fiber over $p\in \cB_n$ is exactly the chain of orbit closures of $\mathbb{C}^*$-orbits containing $p$. This approach yields the same morphism since they agree set theoretically over $\cB_n^\circ$, but the verification that $\cZ$ is a family of algebraic cycles requires a bit more effort. It is, however, somewhat striking that $\cZ$ is defined by the ``determinantal'' relations 
    \[
        \left\{ \frac{\Pi_{ij}}{\Pi_{k\ell}} = \frac{\Pi_{ij}|_{\cB_n}}{{\Pi_{k\ell}|_{\cB_n}}}\right\}_{i,j,k,\ell}.
    \]
    Note that while $\Pi_{ij}|_{\cB_n}$ is identically $\infty \in \bP^1$, the ratio $\Pi_{ij}|_{\cB_n}/\Pi_{k\ell}|_{\cB_n}$ need not be. 
\end{rem}

\begin{rem}
    Note that since $\lambda:\cR\rightarrow\bcg_n$ is flat, the fibers have same Hilbert polynomial. So, using the same steps as above, we also obtain an injection into the Hilbert scheme $\bcg_n\rightarrow \mathcal{Hilb}(\cA_n)$, and thus the normalization of the Hilbert quotient of $\cA_n$ under $\bC^*$-action is also isomorphic to $\bcg_n$ -- see \cite{KapranovChow}*{\S 0.5} for the precise definition of Hilbert quotient. In fact, for an equalized $\bC^*$-action, the normalizations of the Chow quotient and the Hilbert quotient are always isomorphic -- see \cite{ChowquotientCstar}*{Rem. 4.6}.
\end{rem}

\subsection{\texorpdfstring{$\bcg_n$}{Bn} as a GIT quotient of \texorpdfstring{$\cA_n$}{An}} 

In this subsection, we will use the description of the $\bC^*$-action on $\cA_n$ as a natural lift of a $\bC^*$-action on $\bP^{n-1}$ via the blowup description of $\cA_n$ to realize $\bcg_n$ as a GIT quotient of $\cA_n$. There is a $\bC^*$-action on $\bP^{n-1}$ that fixes the point $[1:\ldots:1]$ and the hyperplane $H=\{x_1+\ldots+x_n=0\}$, and acts freely otherwise. We consider an $\mathcal{O}_{\bP^{n-1}}(1)$-linearized $\bC^*$-action $\xi$ with this property given by the linear representation of $\bC^*$ on the vector space $H^0(\bP^{n-1},\mathcal{O}_{\bP^{n-1}}(1))$ defined by
\begin{align*}
  t\cdot (x_1+\ldots+x_n) &= t^{-1}(x_1+\ldots+x_n)\qquad \text{and}\\
  t\cdot (x_1-x_j) &= x_1-x_j \qquad \text{for } j>1.
\end{align*}
Then, as computed in \cite{dolgachevgit}*{Ex. 8.2}, the semistable locus for this action is $(\bP^{n-1})^{\mathrm{ss}}=\bP^{n-1}\setminus\{[1:\ldots:1]\}$, while the stable locus is empty, since a generic orbit-closure will always intersect the fixed locus $H$. The $\bC^*$-invariant graded ring of sections is:
\[
    \bigoplus_{k\geq 0}H^0(\bP^{n-1},\mathcal{O}_{\bP^{n-1}}(k))^{\bC^*}\cong \bC[x_1-x_2,x_1-x_3,\ldots,x_1-x_n].
\]
Consequently, the resulting GIT quotient $\bP^{n-1}\sslash_\xi \bC^*$ is isomorphic to $\bP^{n-2}$. The quotient map $(\bP^{n-1})^{\rm{ss}} \to \bP^{n-1}\sslash_\xi \bC^*$ restricts to an isomorphism $H\xrightarrow{\sim} \bP^{n-2}$. 

\begin{thm}\label{thm: GIT quotient}
  There is an ample line bundle $\mathcal{L}$ on $\cA_n$ that admits a $\bC^*$-linearization such that the resulting GIT quotient $\cA_n\sslash_{\mathcal{L}}\bC^*$ is isomorphic to $\bcg_n$.
\end{thm}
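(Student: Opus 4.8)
The plan is to descend the GIT quotient on the ambient space $\bP^{n-1}$ to the blowup $\cA_n$ by choosing a suitable twist of the pullback of $\mathcal{O}_{\bP^{n-1}}(k)$. First I would recall that $\cA_n$ is obtained from $\bP^{n-1}$ as the wonderful model of $\polyh$, that is, as an iterated blowup $\alpha_0:\cA_n \to \bP^{n-1}$ along (dominant transforms of) the polydiagonals in $\polyh$. The $\bC^*$-action $\xi$ on $\bP^{n-1}$ fixing $[1:\cdots:1]$ and $H$ pointwise permutes the polydiagonals $\Delta^h_\sigma$ (in fact fixes each of them, since they are cut out by the $\bC^*$-invariant linear forms $x_i - x_j$), so the action lifts canonically to each blowup and hence to $\cA_n$; I would check that this lifted action agrees with the action $\mu$ of \Cref{Section 4}, using the coordinate description $t = \Pi_{12}^{-1}$, $z_{1j} = \Pi_{1j}/\Pi_{12}$ from the proof of \Cref{thm: normalized Chow quotient}, where $\bC^*$ acts with weight $-1$ on $t$ and weight $0$ on the $z_{1j}$.

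Next I would pick the linearized line bundle. Take $\mathcal{L}_0 = \alpha_0^*\mathcal{O}_{\bP^{n-1}}(k)$ with the linearization pulled back from $\xi$, and twist by a small negative combination of the exceptional divisors to obtain an ample bundle $\mathcal{L}$ on $\cA_n$ (ampleness of such a twist for $\varepsilon$ sufficiently small and divisible is standard for iterated blowups along smooth centers). The key point is to identify the semistable locus $\cA_n^{\mathrm{ss}}(\mathcal{L})$. Since $\alpha_0$ is an isomorphism away from the polydiagonals, and the only $\mathcal{L}_0$-unstable point of $\bP^{n-1}$ is the fixed point $[1:\cdots:1] \notin H$ (which is disjoint from all the $\Delta^h_\sigma \subset H$ and hence unaffected by the blowups), the unstable locus of $\cA_n$ for $\mathcal{L}_0$ is exactly $\alpha_0^{-1}([1:\cdots:1])$. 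The role of the exceptional twist is to eliminate the remaining strictly-semistable behavior along $H$: I expect that for the twisted $\mathcal{L}$ the semistable points all lie over the complement of $[1:\cdots:1]$ and, crucially, that the strict transform $\delta(Y_n) = \bcg_n$ of $H$ (the distinguished divisor of \Cref{C: special divisor2}) becomes the unique closed orbit in each semistable fiber. By Dolgachev's computation \cite{dolgachevgit}*{Ex. 8.2}, $H \xrightarrow{\sim} \bP^{n-2}$ under the quotient on $\bP^{n-1}$, and since $\cA_n \to \bP^{n-1}$ restricts over $H$ to the blowup producing $Y_n$ from $\bP^{n-2} \cong H$ (this is precisely the factorization $\beta:Y_n \to H$ appearing in \eqref{diagram: big diagram}), the quotient of $\cA_n$ should recover $Y_n \cong \bcg_n$.

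To make this precise I would argue as follows. The $\bC^*$-invariant morphism $\cA_n^{\mathrm{ss}}(\mathcal{L}) \to \cA_n \sslash_{\mathcal{L}} \bC^*$ restricts on $\delta(Y_n) \cong Y_n$, which is pointwise fixed by $\bC^*$ (it lies in the fixed locus, as noted before \Cref{thm: normalized Chow quotient}), to a morphism $Y_n \to \cA_n \sslash_{\mathcal{L}} \bC^*$. I would show this is an isomorphism by checking it is bijective on closed points and that source and target are normal projective varieties. Bijectivity reduces to showing that every semistable orbit closure meets $\delta(Y_n)$ in exactly one point: over the open locus $\cA_n^{\circ\circ}$ this is the $\bC^*$-retraction onto $\bcg_n^\circ$ already described in \Cref{S:introduction} and used in \Cref{thm: normalized Chow quotient}, where $U/\bC^* \cong \bcg_n^\circ$; over the boundary it follows by continuity together with the fact that each polydiagonal is $\bC^*$-invariant with its strict transform meeting $\delta(Y_n)$ transversally. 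Finally, $\cA_n\sslash_{\mathcal{L}}\bC^*$ is normal (being a GIT quotient of a normal variety) and projective, so the bijective birational morphism $Y_n \to \cA_n\sslash_{\mathcal{L}}\bC^*$ is an isomorphism by Zariski's main theorem.

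The main obstacle I anticipate is the precise determination of the semistable locus after twisting, and in particular verifying that the distinguished divisor $\bcg_n = \delta(Y_n)$ carries the unique closed orbits. Controlling the Hilbert–Mumford weights along the exceptional divisors of $\alpha_0$ — to confirm that the $\varepsilon$-twist renders every point over the complement of $[1:\cdots:1]$ semistable while simultaneously forcing the closed orbits to lie on the strict transform of $H$ rather than escaping into the exceptional loci — is the delicate part, and is most safely handled stratum-by-stratum using the explicit blowup description of $\cA_n$ and the coordinate charts of \cite{augmentedstability}*{Lem. 4.4}.
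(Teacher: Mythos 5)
Your strategy---lift the linearization $\xi$ from $\bP^{n-1}$ through the blowups, twist by exceptional divisors to restore ampleness, and identify the quotient with the strict transform of $H$, i.e.\ with $\bcg_n$ via \Cref{C: special divisor2}---is the same as the paper's in outline, and your base computation (Dolgachev's Ex.\ 8.2, giving $\bP^{n-1}\sslash_\xi\bC^*\cong H\cong\bP^{n-2}$) matches exactly. But there is a genuine gap at precisely the step you yourself flag as ``the delicate part'': you never determine the semistable locus for the twisted bundle $\mathcal{L}$, nor do you prove that the closed orbits in it lie on $\delta(Y_n)$. Without that, neither the claimed bijectivity on closed points nor the birationality input to your Zariski's-main-theorem argument is established; the assertion that boundary orbit closures meet $\delta(Y_n)$ in one point ``by continuity together with transversality'' is not a proof, since which points become unstable after an $\epsilon$-twist, and which semistable orbits are closed, depend quantitatively on the twisting coefficients and are exactly what changes when one twists. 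As written, the central claim of the argument is anticipated, not proved.

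The paper closes this gap with a specific tool: Kirwan's blow-up lemma \cite{kirwanblowup}*{Lem. 3.11}, applied \emph{iteratively} along the factorization $\cA_n = X_m \to X_{m-1} \to \cdots \to X_0 = \bP^{n-1}$ into blowups of the polydiagonals in increasing order, rather than via a single one-shot twist. At each step, for a suitably chosen integer $d_k$, the bundle $f_k^*\mathcal{L}_{k-1}^{\otimes d_k}\otimes\mathcal{O}_{X_k}(-E_k)$ is very ample, and Kirwan's lemma identifies $X_k\sslash\bC^*$ with the blowup of $X_{k-1}\sslash\bC^*$ along $V_{k-1}\sslash\bC^*$; since each center $V_{k-1}$ lies in the pointwise-fixed proper transform $H_{k-1}$ and the inductive hypothesis gives $X_{k-1}\sslash\bC^*\cong H_{k-1}$, the new quotient is $\mathrm{Bl}_{V_{k-1}}H_{k-1}=H_k$, and after $m$ steps one lands on the proper transform of $H$ in $\cA_n$, which is $\bcg_n$. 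This lemma does in one stroke both the semistable-locus bookkeeping and the identification of the quotient of the blowup as the blowup of the quotient---the two things your proposal leaves open. Your argument can be repaired either by invoking Kirwan's lemma step by step as the paper does, or by genuinely carrying out the Hilbert--Mumford analysis for your single twist, but the latter is substantially harder than the sketch suggests. (One point you do handle correctly, which the paper also checks: the action lifted through the blowups agrees with the action $\mu$ of \Cref{L:action}, since both are algebraic and agree on a dense open set.)
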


\begin{proof}
    The idea is to iteratively apply \cite{kirwanblowup}*{Lem. 3.11} to the sequence of birational morphisms $\cA_n=X_m\rightarrow X_{m-1}\rightarrow\ldots\rightarrow X_0=\bP^{n-1}$ obtained by blowing up polydiagonals in $\mathcal{Poly}_\infty$ in the increasing order described in \Cref{Section 2}. For the purpose of induction, let us consider the blowup $f_k:X_k\rightarrow X_{k-1}$ along the proper transform $V_{k-1}\subset X_{k-1}$ of $\Delta_{\sigma_k}^h$. Denote by $H_{k-1}$ the proper transform of $H$ in $X_{k-1}$ and by $E_k\subset X_k$ the exceptional divisor of $f_k$; $H_{k-1}$ will still be contained in the fixed locus under the induced $\bC^*$-action on $X_{k-1}$ and $V_{k-1}$ is contained in $H_{k-1}$. We assume that there is a $\bC^*$-linearized ample line bundle $\mathcal{L}_{k-1}$ on $X_{k-1}$ such that the corresponding GIT quotient $X_{k-1}\sslash_{\mathcal{L}_{k-1}}\bC^*$ is isomorphic to $H_{k-1}$. When $k=1$, the ample line bundle is $\mathcal{O}_{\bP^{n-1}}(1)$ with the aforementioned linearization $\xi$; this also furnishes the base case of the induction.

    Then for $k>1$, \cite{kirwanblowup}*{Lem. 3.11} implies that for appropriately chosen integer $d_k$, the line bundle $\mathcal{L}_{d_k}\coloneqq f_k^*\mathcal{L}_{k-1}^{\otimes d_k}\otimes\mathcal{O}_{X_k}(-E_k)$ is very ample, and the induced $\bC^*$-linearization gives a GIT quotient $X_k\sslash_{\mathcal{L}_{d_k}}\bC^*$ isomorphic to the blowup of $X_{k-1}\sslash_{\mathcal{L}_{k-1}}\bC^*$ along $V_{k-1}\sslash_{\mathcal{L}_{k-1}}\bC^*$. This action obtained via Kirwan's considerations \cite{kirwanblowup}*{\S 3} coincides with the action constructed in \Cref{L:action} since they agree on the complement of proper transform of the diagonals in $X_k$. Since $H_{k-1}$ is fixed, $X_k\sslash_{\mathcal{L}_{d_k}}\bC^*$ is isomorphic to the blowup of $H_{k-1}$ along $V_{k-1}$, which is precisely the proper transform $H_k$ of $H$ in $X_k$. Since the proper transform of $H$ in $\cA_n$ is isomorphic to $\bcg_n$, we have constructed a $\bC^*$-linearized ample line bundle $\cL$ on $\cA_n$ such that $\cA_n\sslash_{\cL} \bb{C}^*$ is isomorphic to $\cB_n$.
\end{proof}
\bibliography{refs}
\bibliographystyle{plain}

\end{document}